\theoremstyle{theorem}
\newtheorem{theorem}{Theorem}[section]
\newtheorem{lemma}[theorem]{Lemma}
\newtheorem{corollary}[theorem]{Corollary}
\newtheorem{proposition}[theorem]{Proposition}
\theoremstyle{definition}
\newtheorem{definition}[theorem]{Definition}
\newtheorem{notation}[theorem]{Notation}
\newenvironment{example}
{\pushQED{\qed}\examplex}
{\popQED\endexamplex}
\newenvironment{remark}
{\pushQED{\qed}\remarkx}
{\popQED\endremarkx}
\DeclareMathOperator{\NN}{\mathbb{N}}
\DeclareMathOperator{\ZZ}{\mathbb{Z}}
\DeclareMathOperator{\Wr}{Wr}
\DeclareMathOperator{\Znn}{\mathbb{Z}_{\geqslant 0}}
\DeclareMathOperator{\arc}{Arc}
\newcommand{\bx}{\ensuremath{\mathbf{x}}}
\definecolor{dkgreen}{rgb}{0,0.6,0}
\definecolor{gray}{rgb}{0.5,0.5,0.5}
\definecolor{mauve}{rgb}{0.58,0,0.82}
\title{Wronskians form the inverse system\\ of the arcs of a double point}
\author{Rida Ait El Manssour}
\address{IRIF, CNRS, Université Paris Cité, Paris, France}
\email{manssour@irif.fr}
\author{Gleb Pogudin}
\address{LIX, CNRS, \'Ecole polytechnique, Institute Polytechnique de Paris, Paris, France}
\email{gleb.pogudin@polytechnique.edu}
\begin{document}

\begin{abstract}
    The ideal of the arc scheme of a double point or, equivalently, the differential ideal generated by the ideal of a double point is a primary ideal in an infinite-dimensional polynomial ring supported at the origin.
    This ideal has a rich combinatorial structure connecting it to  singularity theory, partition identities, representation theory, and differential algebra.

    Macaulay inverse system is a powerful tool for studying the structure of primary ideals which describes an ideal in terms of certain linear differential operators.
    In the present paper, we show that the inverse system of the ideal of the arc scheme of a double point is precisely a vector space spanned by all the Wronskians of the variables and their formal derivatives.
    We then apply this characterization to extend our recent result on Poincar\'e-type series for such ideals.
\end{abstract}

\maketitle
\section{Introduction}
The main object studied in this paper is the arc scheme of a double point. 
In algebraic geometry, for a $k$-variety $X$, the points of the arc space correspond to the Taylor coefficients of the $k[\![t]\!]$-points of $X$.
Thus, the arc space of $X$ can be viewed as an infinite-order generalization of the tangent bundle or, in other words, the space of formal trajectories on the variety~\cite{ArcLecture, arcsbook}.
For a variety defined by an ideal $I \subset k[\mathbf{x}]$, where $\mathbf{x} = (x_1, \ldots, x_n)$, the defining ideal $I^{\arc}$ of the arc space belongs to the ring of formal derivatives of $\mathbf{x}$
\[
  k[\mathbf{x}^{(\infty)}] := k[x_i^{(j)} \mid 1 \leqslant i \leqslant n,\; 0 \leqslant j].
\]
If we define $x_i(t) := \sum\limits_{j = 0}^\infty x^{(j)}t^j$, then $I^{\arc}$ is generated by the coefficients of $f(\mathbf{x}(t))$ with respect to $t$ for all $f \in I$.

We will be interested in the case when $X$ is a double point, that is, is defined by an ideal
\begin{equation}\label{eq:In}
  \mathcal{I}_n = \langle x_ix_j \mid 1 \leqslant i, j \leqslant n\rangle \subset k[x_1, \ldots, x_n].
\end{equation}
The ideal $\mathcal{I}_n^{\arc}$ defining its arc space corresponds only to one point over $k$, the origin, but exhibits a rich multiplicity structure which was initially studied in the context of differential algebra~\cite{x2,lowpower,Keefe1960,vertex}, appeared in representation theory~\cite{Feigin2002}, and recently attracted attention because of its connections (especially in the case $n = 1$) to partition identities~\cite{lotharing,Afsharijoo2021,afsharijoo2021andrewsgordon,Bruschek2012,AM2020,handbook,Bai2019}.

One classical approach to study such $\mathfrak{m}$-primary ideals of the maximal ideal $\mathfrak{m}$ at the origin is via considering linear differential operators with constant coefficients called \emph{Noetherian equations}. 
These operators map the given primary ideal to a subspace of $\mathfrak{m}$.
This approach goes back to Macaulay and Gr\"obner (see~\cite{Alonso2006} for a survey) and can be viewed as a far-reaching generalization of the characterization of the root multiplicities of univariate polynomials through the vanishing of the derivatives.
Noetherian equations provide an alternative representation for the corresponding ideals, and a generalization of this construction has been recently empolyed, for example, for numerical primary decomposition~\cite{Chen2022} and for using commutative algebra to solve linear PDEs~\cite{CidRuiz2021,AitElManssour2021}.

Since $\mathcal{I}_n^{\arc}$ is a primary ideal of the maximal ideal at the origin (although in the infinite-dimensional polynomial ring), a natural question is to find the Noetherian equations (or, equivalently, the Macaulay inverse system) for $\mathcal{I}_n^{\arc}$.
The main result of this paper is a concise and surprising answer to this question: 

\begin{displayquote}
\emph{The Macaulay inverse system of the arc scheme of a double point is spanned by the Wronskians of finite subsets of $\mathbf{x}^{(\infty)}$}.
\end{displayquote}
One intriguing aspect of the proof is that it borrows some ideas from a seemingly unrelated characterization of Null Lagrangians~\cite{Ball1981} from the calculus of variations.

In the finite-dimensional case, a natural invariant of a primary ideal contained within a maximal ideal is its dimension.
The dimension of $\mathcal{I}_n^{\arc}$ is infinite, but one consider instead the dimensions of the truncations $\dim k[\mathbf{x}^{(\leqslant h)}] / (\mathcal{I}_n^{\arc} \cap k[\mathbf{x}^{(\leqslant h)}])$, where $k[\mathbf{x}^{(\leqslant h)}] := k[x_i^{(j)} \mid 1 \leqslant i \leqslant n,\; 0 \leqslant j \leqslant h]$ (see~\cite{GlebRida} for details).
As an application, we use the computed Macaulay inverse system for $\mathcal{I}_n^{\arc}$ and the recently proved Kolchin-Schmidt conjecture~\cite{etesse2023schmidtkolchin} to compute these dimensions for the case of double points thus extending the main result of~\cite{GlebRida}.
Specifically, we show that
\begin{equation}\label{eq:dim}
\dim k[\mathbf{x}^{(\leqslant h)}] / (\mathcal{I}_n^{\arc} \cap k[\mathbf{x}^{(\leqslant h)}]) = (n + 1)^{h + 1}.
\end{equation}

The rest of the paper is structured as follows.
Section~\ref{sec:preliminaries} contains necessary preliminaries and notation. In Section~\ref{Sec:Main}, we state the main results of the paper. 
In Section~\ref{sec:Mult} we prove Theorem~\ref{thm:main}. Then we use it to characterize the inverse systems of the elimination ideals $\mathcal{I}_n^{\arc} \cap k[\mathbf{x}^{(\leqslant h)}]$ in Section~\ref{sec:Trunc}.
This characterization is then used in Section~\ref{sec:poincare} to establish the dimension result~\eqref{eq:dim}.

\textbf{Acknowledgements.}
We would like to thank Fulvio Gesmundo and Bernd Sturmfels for stimulation discussions and Bogdan Rai\c{t}\u{a} for pointing out the paper~\cite{Ball1981}.
 This work has partly been supported by the French ANR-22-CE48-0008 OCCAM project.

\section{Preliminaries}\label{sec:preliminaries}

\begin{notation}[Formal derivatives]
  Let $x$ be a variable in a polynomial ring.
  Then we will consider $x, x', x'', x^{(3)}, \ldots$ as independent polynomial variables, and, for any $h \in \Znn$, we introduce the following notation
  \[
    x^{(<h)} := (x, x', \ldots, x^{(h - 1)}) \quad \text{ and }\quad x^{(\infty)} := (x, x', x'', \ldots).
  \]
  $x^{(\leqslant h)}$ is defined analogously.
  Similarly, for a tuple $\mathbf{x} = (x_1, \ldots, x_n)$ of variables we have
  \[
    \mathbf{x}^{(< h)} := (x_1^{(< h)}, \ldots, x_n^{(<h)}) \quad\text{ and }\quad \mathbf{x}^{(\infty)} := (x_1^{(\infty)}, \ldots, x_n^{(\infty)}).
  \]
\end{notation}

\begin{definition}[Arc space~\cite{ArcLecture}]
    Let $I \subset k[\mathbf{x}]$ (where $\mathbf{x} = (x_1, \ldots, x_n)$) be an ideal defining a variety $X$.
    Then the defining ideal $I^{\arc}$ of the \emph{arc space} of $X$ belongs to the polynomial ring $k[\mathbf{x}^{(\infty)}]$ in formal derivatives of $\mathbf{x}$.
    If we denote $\mathbf{x}(t) := \sum\limits_{j = 0}^\infty \mathbf{x}^{(j)}t^j$, then $I^{\arc}$ is generated by the coefficients of $f(\mathbf{x}(t))$ with respect to $t$ for all $f \in I$.
\end{definition}

For defining inverse systems, we will restrict ourselves to ideals with the support at the origin in order to keep things simple.

\begin{definition}[Inverse system~\cite{Alonso2006}]
    Let $p, q \in k[\mathbf{x}]$ be polynomials.
    We define 
    \[
    p \bullet q = p\left(\frac{\partial}{\partial x_1}, \ldots, \frac{\partial}{\partial x_n}\right) q(\mathbf{x}).
    \]
    Let $I \subset k[\mathbf{x}]$ be an $\mathfrak{m}$-primary ideal, where $\mathfrak{m}$ is the maximal ideal generated by $\mathbf{x}$.
    Then the \emph{inverse system} $I^{\perp} \subset k[\mathbf{x}]$ is a vector space defined by
    \[
      I^{\perp} := \{f \in k[\mathbf{x}] \mid \forall p \in I \colon (f\bullet p) \in \mathfrak{m}\}.
    \]
\end{definition}

\begin{example}
    Let $I = \langle x^2, y^2, z - x- y \rangle \subseteq k [x,y,z],$ which is $\langle x,y,z \rangle$-primary. Then $I^\perp$ is the $k$-vector space generated by $\{ 1, x - y, x + z, xy + yz + xz + z^2\}.$
\end{example}

The following proposition gives a dual definition of the inverse system of a $\mathfrak{m}-$primary ideal as a solution space to linear differential equations with constant coefficients.
   \begin{proposition}\label{pro:dualdefinition}
    For every $\mathfrak{m}$-primary ideal $I$ of $k[\bx]$ where $\mathfrak{m} = \langle x_1, \ldots , x_n \rangle $, the following characterization of  $I^\perp$ holds:
  \[
        I^\perp = \left\{ P \in k[\bx]\, | \, \forall\, f \in I, f \bullet P =0 \right\}.  
    \]
\end{proposition}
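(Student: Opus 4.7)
The plan is to establish the two inclusions separately, using the symmetry of the apolar pairing and the fact that $I$ is closed under multiplication by arbitrary polynomials. First I would record the symmetry $(f \bullet g)(0) = (g \bullet f)(0)$ for all $f, g \in k[\mathbf{x}]$, which follows by bilinearity from the monomial computation $\mathbf{x}^\alpha \bullet \mathbf{x}^\beta = \frac{\beta!}{(\beta-\alpha)!}\mathbf{x}^{\beta-\alpha}$ when $\beta \geqslant \alpha$ componentwise and $0$ otherwise (so $\langle \mathbf{x}^\alpha, \mathbf{x}^\beta\rangle := (\mathbf{x}^\alpha \bullet \mathbf{x}^\beta)(0) = \alpha!\,\delta_{\alpha\beta}$). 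This already handles the inclusion $\supseteq$: if $f \bullet P = 0$ for every $f \in I$, then in particular $(P \bullet f)(0) = (f \bullet P)(0) = 0$, so $P \bullet f \in \mathfrak{m}$ and $P \in I^\perp$.

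For the substantive inclusion $\subseteq$, I would fix $P \in I^\perp$ and $f \in I$, and show that $f \bullet P$ vanishes by checking that each coefficient is zero. The key algebraic identity is that constant-coefficient differential operators compose via multiplication of the associated polynomials: for any multi-index $\alpha$,
\[
\partial^\alpha(f \bullet P) \;=\; \partial^\alpha f(\partial) P(\mathbf{x}) \;=\; (\mathbf{x}^\alpha f)(\partial) P(\mathbf{x}) \;=\; (\mathbf{x}^\alpha f) \bullet P,
\]
where $\mathbf{x}^\alpha f$ is formed in the polynomial ring and then reinterpreted as a differential operator. Evaluating both sides at the origin and using $(\partial^\alpha Q)(0) = \alpha!\,[\mathbf{x}^\alpha]Q$, I obtain
\[
[\mathbf{x}^\alpha](f \bullet P) \;=\; \tfrac{1}{\alpha!}\,((\mathbf{x}^\alpha f) \bullet P)(0).
\]
Because $I$ is an ideal, $\mathbf{x}^\alpha f \in I$, and combining this with $P \in I^\perp$ and the symmetry from the first paragraph forces $((\mathbf{x}^\alpha f) \bullet P)(0) = 0$. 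Hence every coefficient of $f \bullet P$ vanishes and $f \bullet P \equiv 0$.

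I do not anticipate a genuine obstacle: the proposition is essentially the formal adjointness between multiplication by $\mathbf{x}^\alpha$ and differentiation $\partial^\alpha$ together with the closedness of $I$ under polynomial multiplication. The one point that requires some care is keeping straight which polynomial plays the role of operator and which of argument, since the original definition of $I^\perp$ has $I^\perp$ on the operator side while the dual characterization puts it on the argument side; the symmetry of $\langle \cdot,\cdot\rangle$ recorded at the outset resolves this bookkeeping issue cleanly.
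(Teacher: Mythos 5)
Your proof is correct and is essentially the same argument as the paper's, just organized more conceptually: where the paper expands $f \bullet P$ in coordinates and matches the coefficient of $\tfrac{1}{u!}\bx^u$ against the constant term of $P \bullet (f\cdot \bx^u)$, you factor that computation into the symmetry $(f \bullet g)(0) = (g \bullet f)(0)$ and the adjointness identity $\partial^\alpha(f \bullet P) = (\bx^\alpha f)\bullet P$, then use closure of $I$ under multiplication in the same way.
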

\begin{proof}
For $\bm{\alpha} = (\alpha_1, \ldots, \alpha_n) \in \ZZ^n$ we abbreviate $\bx^{\bm{\alpha}} =x_1^{\alpha_1} \cdots x_n^{\alpha_n} $.
    Note that, for every two monomials $\bx^{\bm{\alpha}}$ and $\bx^{\bm{\beta}}$ we have: 
    \[  
        \bx^{\bm{\beta}} \bullet \bx^{\bm{\alpha}} = 
            \begin{cases}
              \tfrac{\alpha !}{ (\alpha - \beta)!}  \bx^{\bm{\alpha} - \bm{\beta}} \, \text{ if } \bm{\beta} \leqslant \bm{\alpha} \\
                0
            \end{cases}
    \]
    where $\bm{\alpha}! := \alpha_1 !  \cdots \alpha_n!$.
    Let $P = a_{u_0} \bx^{u_0} + \ldots + a_{u_k} \bx^{u_k} $  and $f = \sum_{u \in \NN^n} f_u \bx^u  \in I$, such that $f \bullet P =0$. This implies that,
    \[ 
        f \bullet P = \sum_{v \leqslant u_0} a_{u_0} f_{v} \tfrac{u_0!}{(u_0 - v)!} \bx^{ u_0 - v }+ \ldots + \sum_{v \leq u_k} a_{u_k} f_{v} \tfrac{u_k!}{(u_k - v)!} \bx^{ u_k - v } = 0.
    \]
    Thus, the coefficient of $\tfrac{1}{u!} \bx^u$ in the last equation is  $a_{u_0} f_{u_0 - u} u_0! + \cdots + a_{u_k} f_{ u_k - u} u_k!$ which is equal to the remainder of $P \bullet ( f \cdot \bx^u)$ modulo $\mathfrak{m}$. Therefore $f \bullet  P =0 $ if and only if, $P \bullet ( f \cdot \bx^u) \in  \mathfrak{m}$ for  all monomial $\bx^u$. i.e., $P \bullet f \in \mathfrak{m} $ for all $f \in   I $ if and only if $f \bullet P =0$ for all $f \in   I $.
\end{proof}

\begin{notation}[Wronskians]
We define a derivation on $k[\mathbf{x}^{(\infty)}]$ by $(x_i^{(j)})' := x_i^{(j + 1)}$ for every $1 \leqslant i \leqslant n$ and $j \geqslant 0$.
Then, for polynomials $f_1, \ldots, f_\ell \in k[\mathbf{x}^{(\infty)}]$, we define the \emph{Wronskian  determinant} $\Wr(f_1, \ldots, f_\ell) $ as follows:
\begin{align}\label{eq: HanWron}
\Wr (f_1, \ldots, f_\ell) := \begin{vmatrix}
f_1 &  f_2 & \ldots & f_\ell \\
f_1' & f_2' & \ldots & f_\ell'\\
\vdots & \vdots & \vdots & \vdots\\
f_1^{(\ell - 1)} & f_2^{(\ell - 1)} & \ldots & f_\ell^{(\ell - 1)}\\
\end{vmatrix}.
\end{align}
\end{notation}

\section{Main results}\label{Sec:Main}

Our first main result characterizes the inverse system of the arc space of a double point defined by the ideal  $\mathcal{I}_n$~\eqref{eq:In}.

\begin{theorem}\label{thm:main}
    For every integer $n > 0$, the inverse system $(\mathcal{I}_n^{\arc})^\perp$ of the arc space of a double point is spanned by the Wronskians $\Wr(S)$, where $S$ ranges over all finite subsets~$S \subset \mathbf{x}^{(\infty)}$.
\end{theorem}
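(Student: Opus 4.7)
The plan is to prove two inclusions separately. The easy direction $\langle \Wr(S)\rangle \subseteq (\mathcal{I}_n^{\arc})^\perp$ is a direct computation: by Proposition~\ref{pro:dualdefinition} it suffices to verify $g\bullet \Wr(S) = 0$ for the generators $g_{ij,k} := \sum_{a+b=k} x_i^{(a)} x_j^{(b)}$ of $\mathcal{I}_n^{\arc}$ (which arise as the coefficients of $t^k$ in $x_i(t) x_j(t)$). I will introduce the parametrized derivation $D_i(s) := \sum_{a\geqslant 0} s^a\, \partial/\partial x_i^{(a)}$ on $k[\mathbf{x}^{(\infty)}]$ and note that $\sum_k s^k\,(g_{ij,k}\bullet)$ coincides with the second-order operator $D_i(s) D_j(s)$, so the task reduces to proving $D_i(s) D_j(s) \Wr(S) = 0$ identically in $s$.

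Since $D_i(s)$ is a derivation satisfying $D_i(s)(x_{i'}^{(c)}) = s^c\,\delta_{i,i'}$, applying it to $\Wr(S) = \det\bigl(y_m^{(a-1)}\bigr)_{a,m=1}^{\ell}$ (with $y_m = x_{i_m}^{(j_m)}$) replaces every column whose $i_m = i$ by the constant Vandermonde-style vector $s^{j_m}(1,s,\ldots,s^{\ell-1})^T$ and annihilates the other columns. Applying $D_j(t)$ on top of that and then specializing $t = s$ produces a sum of determinants, each of which contains two identical columns equal to $(1,s,\ldots,s^{\ell-1})^T$, and therefore vanishes. Expanding in powers of $s$ then gives $g_{ij,k}\bullet \Wr(S) = 0$ for every $k\geqslant 0$.

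The reverse inclusion $(\mathcal{I}_n^{\arc})^\perp \subseteq \langle \Wr(S)\rangle$ is the main obstacle. The plan is to induct on multi-degree, exploiting that both the inverse system and the Wronskian span are closed under the partial derivatives $\partial/\partial x_i^{(a)}$ (automatic for the inverse system; on the Wronskian side, this requires cofactor expansion plus a Pl\"ucker-type straightening identity rewriting incomplete-Wronskian minors with a missing interior row as combinations of genuine Wronskians) and under the total derivation $D\colon x_i^{(a)}\mapsto x_i^{(a+1)}$, for which $D\,\Wr(S) = \sum_m \Wr\bigl((S\setminus\{y_m\})\cup\{y_m'\}\bigr)$. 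A conceptual version of the argument, following the null-Lagrangian philosophy of~\cite{Ball1981} flagged in the introduction, is to polarize a multi-homogeneous element $P$ into a multilinear symmetric tensor and re-read the constraints $g_{ij,k}\bullet P = 0$ as the statement that this tensor vanishes whenever two of its arguments are identified along the Vandermonde direction $(1,s,\ldots,s^{\ell-1})^T$ produced in the forward computation; such alternating tensors are precisely linear combinations of Wronskians. The delicate point will be making this polarization/alternation argument rigorous without circularly invoking the dimension formula~\eqref{eq:dim}, which the paper derives afterward as a consequence of Theorem~\ref{thm:main}.
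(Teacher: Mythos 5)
Your treatment of the easy inclusion $\langle\Wr(S)\rangle \subseteq (\mathcal{I}_n^{\arc})^\perp$ is correct and is essentially the paper's argument in a slightly different notation: your parametrized derivation $D_i(s)$ is exactly the $\alpha_i = 1$, $\alpha_{i'} = 0$ specialization of the paper's operator $D_{\xi,\bm\alpha}$, and your identity $\sum_k s^k(g_{ij,k}\bullet) = D_i(s)D_j(s)$ is the content of Lemma~\ref{lem:D2_property}; the ``two proportional Vandermonde columns'' observation is the same one the paper uses in Proposition~\ref{pro: HanWron2 }. Your packaging is a bit more direct (you avoid the exponential-perturbation detour of Lemma~\ref{lem:LinearityMulti}), but there is no genuine divergence here.

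The reverse inclusion is where the proof actually lives, and what you have written is a research plan with acknowledged gaps rather than an argument. You float two strategies — induction on multi-degree using closure of the Wronskian span under $\partial/\partial x_i^{(a)}$ and under $D$, and a polarization argument in the spirit of Ball — but carry out neither, and you yourself flag that making the polarization rigorous is ``the delicate point.'' Concretely, several load-bearing ideas in the paper's proof do not appear in your sketch. First, the paper does not try to straighten arbitrary minors or polarize directly; instead it proves (Corollary~\ref{cor:vanish} and Proposition~\ref{prop:vanish_on_subspace}) that a homogeneous $P$ of degree $d+1$ in $(\mathcal{I}_n^{\arc})^\perp$ vanishes on any $n$-tuple drawn from a $d$-dimensional differentially closed subspace $V \subset k[\![t]\!]$. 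That step needs both the cyclic vector theorem and a degeneration argument (approximating quasi-polynomials $t^j e^{\mu t}$ by sums of exponentials with nearby distinct exponents and passing to the limit, first over $\mathbb{C}$, then descending to arbitrary $k$ of characteristic zero by a coefficient-vanishing argument) to handle repeated characteristic roots — none of this is in your plan, and it is not optional: without it you only get vanishing on sums of distinct exponentials, which is not enough. Second, the crux of the conclusion is algebraic, not representation-theoretic: one shows the ideal $\mathbf{J}_{n,d}$ of maximal minors of the Hankel matrix $\mathbf{H}_{n,d}$ is a prime differential ideal via Eisenbud's $1$-genericity (Lemma~\ref{lemma:minors}), then applies the Nullstellensatz to get $P \in \mathbf{J}_{n,d}$, and finally uses degree homogeneity to conclude $P$ is a $k$-linear combination of the $d\times d$ minors themselves. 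Your proposal never invokes any Nullstellensatz-type step or any primeness of a determinantal ideal; without such a bridge from ``vanishes on the determinantal variety'' to ``lies in the span of the minors,'' the argument does not close. Also, your claimed closure of the Wronskian span under $\partial/\partial x_i^{(a)}$ via a Pl\"ucker-type straightening (re-expressing generalized Wronskians with a missing interior row) is itself a nontrivial lemma that you would need to prove; in the paper it comes out a posteriori from Corollary~\ref{cor:inf_by_inf}.
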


\begin{example}
    Let us determine the elements of $(\mathcal{I}_1^{\arc})^\perp \subseteq k[x^{(\infty)}]$ of order and degree at most $2$. 
    The  generators of $\mathcal{I}_1^{\arc}$ involving variables of order at most $2$ are $ G = \{ x^2, xx', 2xx'' + (x')^2 ,  x x^{(3)} + x' x'', 2x x^{(4)} + 2x' x^{(3)} +  (x'')^2  \}$. Consider the following polynomial operator 
    \[
    P = c_0 + c_1 \frac{\partial}{\partial x} + c_2 \frac{\partial}{\partial x'} + c_3 \frac{\partial}{\partial x''} + c_4 \frac{\partial^2}{\partial x^2} + c_5 \frac{\partial^2}{\partial (x')^2} + c_6 \frac{\partial^2}{\partial (x'')^2} + c_7 \frac{\partial^2}{\partial x x'} + c_8 \frac{\partial^2}{\partial x x''} + c_9 \frac{\partial^2}{\partial x' x''}
    \]
    by applying $P$ to the polynomials in $G$ we realize that the resulting polynomials belongs to $\mathfrak{m}$ if and only if 
    \[
    c_4 = c_6 = c_7 = c_9 = 0 \text{ and } c_5 + c_8 = 0. 
    \]
    Therefore the vector space $(\mathcal{I}_1^{\arc})^{\perp}\cap k[x,x',x'']_{ \leq 2}$ is generated by $\{ 1, x, x', x'', xx'' - (x')^2\}$. In fact, one could even show that it is also equal to  $(\mathcal{I}_1^{\arc})^{\perp}\cap k[x,x',x'']$.
\end{example}

We use Theorem~\ref{thm:main} to derive the following Poincare-type series for $\mathcal{I}_n^{\arc}$ extending~\cite[Theorem~3.1]{GlebRida}.

\begin{theorem}\label{thm:dim}
    For every positive integer $n$, we have
    \[
    \sum\limits_{h = 0}^\infty \dim \left( k[\mathbf{x}^{(\leqslant h)}] / (\mathcal{I}_n^{\arc} \cap k[\mathbf{x}^{(\leqslant h)}]) \right) t^h = \frac{n + 1}{1 - (n + 1)t}.
    \]
\end{theorem}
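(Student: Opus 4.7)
My plan is to reduce Theorem~\ref{thm:dim} to the term-by-term identity
\[
\dim k[\mathbf{x}^{(\leqslant h)}] / (\mathcal{I}_n^{\arc} \cap k[\mathbf{x}^{(\leqslant h)}]) = (n+1)^{h+1} \quad \text{for every } h \geqslant 0,
\]
since summing the resulting geometric series $\sum_{h \geqslant 0} (n+1)^{h+1} t^h$ immediately produces $(n+1)/(1-(n+1)t)$. I would then establish each of these equalities by sandwiching the dimension between matching upper and lower bounds.

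For the upper bound I would invoke the Schmidt--Kolchin conjecture in the form recently proved by Etesse~\cite{etesse2023schmidtkolchin}. The ideal $\mathcal{I}_n$ defines a fat point whose coordinate ring $k[\mathbf{x}]/\mathcal{I}_n$ has $k$-dimension $n+1$ (a basis being $\{1, x_1, \ldots, x_n\}$); feeding this multiplicity into Schmidt--Kolchin yields $\dim k[\mathbf{x}^{(\leqslant h)}]/(\mathcal{I}_n^{\arc} \cap k[\mathbf{x}^{(\leqslant h)}]) \leqslant (n+1)^{h+1}$ at once.

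For the matching lower bound I would exploit the characterization of the truncated inverse system developed in Section~\ref{sec:Trunc}. By Macaulay duality, the quotient has the same $k$-dimension as $(\mathcal{I}_n^{\arc} \cap k[\mathbf{x}^{(\leqslant h)}])^\perp$, and Section~\ref{sec:Trunc}---building on Theorem~\ref{thm:main}---describes the latter as the span of appropriate Wronskians with entries in $k[\mathbf{x}^{(\leqslant h)}]$. It therefore suffices to exhibit $(n+1)^{h+1}$ linearly independent such Wronskians; combined with the Schmidt--Kolchin upper bound, this forces the asserted equality.

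The main obstacle, I expect, is precisely this independent count, since the naive enumeration by subsets $S \subseteq \mathbf{x}^{(\leqslant h)}$ is too coarse: for $n \geqslant 2$ several subsets collapse to proportional (truncated) Wronskians, while size-$\ell$ subsets with $\ell > h+1$ truncate to zero. I would deal with this through a leading-monomial argument with respect to a suitable graded order on $k[\mathbf{x}^{(\leqslant h)}]$: expanding each Wronskian as its antidiagonal monomial plus lower-order corrections, I would parametrize a family of $(n+1)^{h+1}$ Wronskians by functions $\varphi\colon \{0,1,\ldots,h\} \to \{0,1,\ldots,n\}$ (where $\varphi(j)=i>0$ encodes the choice of variable $x_i^{(j)}$ at derivative level $j$ and $\varphi(j)=0$ means ``no variable at level $j$''), inserting appropriate derivative shifts to keep the leading monomials pairwise distinct. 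Linear independence is then immediate, matching the Schmidt--Kolchin upper bound and closing the proof.
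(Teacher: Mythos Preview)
Your sandwich strategy has a genuine gap on the upper-bound side. The Schmidt--Kolchin conjecture, as proved by Etesse, is the statement that the space of differentially homogeneous polynomials of degree $d$ in $m$ differential variables has dimension $m^d$; it is not a bound on arc-scheme lengths of fat points. There is no result of the form ``if $\dim_k k[\mathbf{x}]/I = m$ then $\dim_k k[\mathbf{x}^{(\leqslant h)}]/(I^{\arc}\cap k[\mathbf{x}^{(\leqslant h)}]) \leqslant m^{h+1}$'' that you can cite, so the step ``feeding this multiplicity into Schmidt--Kolchin yields $\leqslant (n+1)^{h+1}$ at once'' is unjustified. Indeed, the equality you want was the open content of~\cite[Theorem~3.1]{GlebRida} for $n=1$, and neither inequality is known to be trivial in general.

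The paper does use Schmidt--Kolchin, but not as a black-box upper bound on the arc scheme. Instead it computes the inverse-system dimension \emph{exactly} through a chain of linear isomorphisms: Corollary~\ref{cor:main_at0} identifies $(\mathcal{I}_n^{\arc}\cap k[\mathbf{x}^{(\leqslant h)}])^\perp$ with the span of minors of $\mathbf{T}_{n,h}$; Lemmas~\ref{lem:T_to_S} and~\ref{lem:S_to_S} carry this bijectively to the span of maximal minors of $\mathbf{S}_{n+1,h}|_{x_{n+1}=1}$; and only then does Etesse's theorem enter, asserting that the maximal minors of $\mathbf{S}_{n+1,h}$ are precisely the differentially homogeneous polynomials of degree $h+1$ in $n+1$ variables, of dimension $(n+1)^{h+1}$, with Reinhart's injectivity~\cite{Reinhart1996} showing that specialising $x_{n+1}=1$ does not drop the dimension. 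Both inequalities thus come from Schmidt--Kolchin, but only after the bridge to differentially homogeneous polynomials has been built.

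Your lower-bound sketch via leading monomials is in spirit close to the easy direction of Schmidt--Kolchin (exhibiting $(n+1)^{h+1}$ independent minors), but the parametrisation by $\varphi\colon\{0,\ldots,h\}\to\{0,\ldots,n\}$ does not yet specify which subsets $S$ to take or why truncation at order $h$ leaves the leading monomials distinct; this would need to be checked. More importantly, even a completed lower bound does not close the argument without a valid upper bound, and the hard direction of Schmidt--Kolchin is precisely what supplies it---once the identification with differentially homogeneous polynomials is in place.
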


\section{Proof of Theorem~\ref{thm:main}}\label{sec:Mult}

We introduce new transcendental constants $\xi, \alpha_1, \ldots, \alpha_n$ (and denote $\bm{\alpha} = (\alpha_1, \ldots, \alpha_n)$), and consider the following differential operator with coefficients in $k[\xi , \bm{\alpha}]$ 
\[ 
D_{\xi, \bm{\alpha}} := \sum\limits_{j = 1}^n \sum\limits_{i = 0}^\infty \alpha_j \xi ^i \frac{\partial}{\partial x_j^{(i)}}.
\]
The key property of this operator is given by the lemma below:
\begin{lemma}\label{lem:D2_property}
    Let $P \in k [ \bf{x}^{(\infty)} ]$, then $P \in (\mathcal{I}_n^{\arc})^\perp$ if and only if $D_{\xi, \bm{\alpha}}^2(P) = 0$.
\end{lemma}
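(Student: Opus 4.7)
The plan is to compute $D^2_{\xi, \bm{\alpha}}(P)$ as a polynomial in the transcendentals $\xi, \alpha_1, \ldots, \alpha_n$ with coefficients in $k[\mathbf{x}^{(\infty)}]$, and to recognize these coefficients as the $\bullet$-actions on $P$ of a natural generating set of $\mathcal{I}_n^{\arc}$.

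First I would write down the explicit generators of $\mathcal{I}_n^{\arc}$: they are the $t$-coefficients of $x_i(t)x_j(t)$, namely
\[
g_m^{i,j} := \sum_{k+l=m} x_i^{(k)} x_j^{(l)}, \qquad 1 \leqslant i,j \leqslant n,\ m \geqslant 0.
\]
The pivotal observation is that these generators are neatly assembled into a perfect square: writing $x_j(\xi) := \sum_{i \geqslant 0} x_j^{(i)} \xi^i$, one has
\[
\Bigl(\sum_{j=1}^n \alpha_j\, x_j(\xi)\Bigr)^{\!2} \;=\; \sum_{i,j,m} \alpha_i \alpha_j\, \xi^m\, g_m^{i,j}.
\]

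Next, I would note that $D_{\xi, \bm{\alpha}}$ is precisely the operator obtained from $\sum_j \alpha_j x_j(\xi)$ via the symbolic substitution $x_j^{(i)} \mapsto \partial/\partial x_j^{(i)}$. Because the partial derivatives all commute pairwise, this substitution is a ring homomorphism from $k[\xi, \bm{\alpha}][\mathbf{x}^{(\infty)}]$ to the algebra of constant-coefficient differential operators, so $D^2_{\xi,\bm{\alpha}}$ is the operator associated with $(\sum_j \alpha_j x_j(\xi))^{2}$. Applying $D^2_{\xi, \bm{\alpha}}$ to $P$ thus yields
\[
D^2_{\xi, \bm{\alpha}}(P) \;=\; \sum_{i,j,m} \alpha_i \alpha_j\, \xi^m\, (g_m^{i,j} \bullet P).
\]
Since $\xi$ and the $\alpha_j$ are algebraically independent over $k$, vanishing of the right-hand side is equivalent to vanishing of every $g_m^{i,j} \bullet P$ (the symmetry $g_m^{i,j} = g_m^{j,i}$ contributes only a harmless factor of $2$ on off-diagonal monomials $\alpha_i \alpha_j$ with $i\neq j$).

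To close, I would invoke Proposition~\ref{pro:dualdefinition}: $P \in (\mathcal{I}_n^{\arc})^{\perp}$ iff $f \bullet P = 0$ for every $f \in \mathcal{I}_n^{\arc}$. Since the operators $f(\partial)$ commute, the identity $(hg) \bullet P = h(\partial)(g \bullet P)$ shows that vanishing on any generating set of $\mathcal{I}_n^{\arc}$ already implies vanishing on the whole ideal; in particular it suffices to test on the $g_m^{i,j}$'s. This matches the coefficient-wise condition obtained from $D^2_{\xi,\bm{\alpha}}(P) = 0$, which completes the equivalence. I do not expect a serious obstacle here — the proof is a transparent generating-function computation — and the only real care required is the combinatorial bookkeeping matching ordered index pairs $(j_1,j_2),(i_1,i_2)$ in the expansion of $D^2$ with the symmetric generators $g_m^{i,j}$.
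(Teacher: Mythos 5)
Your proof is correct and takes essentially the same approach as the paper: both compute $D^2_{\xi,\bm{\alpha}}(P)$ as a polynomial in $\xi,\bm{\alpha}$ and identify its coefficients (up to a factor of $1$ or $2$) with $g_m^{i,j}\bullet P$ for the standard generators $g_m^{i,j} = \sum_{k+l=m} x_i^{(k)}x_j^{(l)}$ of $\mathcal{I}_n^{\arc}$. Your generating-function framing via the substitution $x_j^{(i)}\mapsto\partial/\partial x_j^{(i)}$, and your explicit remark that vanishing on a generating set suffices because $(hg)\bullet P = h\bullet(g\bullet P)$, are tidier presentations of the same argument; the paper runs the coefficient computation directly and leaves the reduction to generators implicit.
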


\begin{proof}
    We will consider $Q = D_{\xi, \bm{\alpha}}^2(P)$ as a polynomial in $\xi, \bm{\alpha}$. 
    Since $D_{\xi, \bm{\alpha}}$ is linear in $\bm{\alpha}$, the monomials that may appear in $Q$ will be of the form $\alpha_i\alpha_j\xi^\ell$ for $1 \leqslant i \leqslant j \leqslant n$ and $\ell \geqslant 0$.
    Then direct computation shows that the coefficient in front of this monomial is
    \begin{equation}\label{eq:expandD}
    C\sum\limits_{s = 0}^\ell \frac{\partial^2}{\partial x_i^{(s)}\partial x_j^{(\ell - s)}}(P),
    \end{equation}
    where $C = 1$ if $i = j$ and is equal to $2$ otherwise.
    This expression can be rewritten as $C (\sum\limits_{s = 0}^\ell x_i^{(s)}x_j^{(\ell - s)}) \bullet P$, and the polynomial $\sum\limits_{s = 0}^\ell x_i^{(s)}x_j^{(\ell - s)}$ is the coefficient at $t^\ell$ in $x_i(t) x_j(t)$.
    Therefore, vanishing of~\eqref{eq:expandD} for all $1 \leqslant i \leqslant j \leqslant n$ and $\ell \geqslant 0$ is equivalent to $P \in (\mathcal{I}_n^{\arc})^\perp$, and this finishes the proof.
\end{proof}

Lemma~\ref{lem:D2_property} implies that the elements of $(\mathcal{I}_n^{\arc})^\perp$ satisfy the following ``linearity under exponential perturbations'' property.

\begin{lemma}\label{lem:LinearityMulti}
Let $P \in k [ \bf{x}^{(\infty)} ]$, then $P \in (\mathcal{I}_n^{\arc})^\perp$ if and only if the following equality holds in the ring $k[\mathbf{x}^{(\infty)},  \bm{\alpha}][\![ \xi, t]\!]$ with the derivation extended by $t' = 1$:
\[ 
    P(\mathbf{x} + \bm{\alpha} e^{\xi t}) = P(\mathbf{x}) + e^{\xi t} Q(\xi, \bm{\alpha}, \bx).
\]
where $Q \in k[\xi, \bm{\alpha}, \bx^{(\infty)}]$.
Furthermore, if such equality holds, then $Q = D_{\xi, \bm{\alpha}}(P)(\mathbf{x})$.
\end{lemma}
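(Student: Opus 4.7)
My plan is to compute $P(\mathbf{x} + \bm{\alpha} e^{\xi t})$ directly via Taylor's formula, recognize that the result is naturally expressed in terms of $D_{\xi, \bm{\alpha}}$, and then appeal to Lemma~\ref{lem:D2_property}. The first step is to observe that the substitution $\mathbf{x} \mapsto \mathbf{x} + \bm{\alpha} e^{\xi t}$ must commute with the extended derivation (which uses $t' = 1$), and therefore sends $x_j^{(i)}$ to $x_j^{(i)} + \alpha_j \xi^i e^{\xi t}$; in other words, each formal derivative picks up the perturbation $v_j^{(i)} := \alpha_j \xi^i e^{\xi t}$.

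Next, I would apply the finite multivariate Taylor formula to $P$ (which involves only finitely many of the $x_j^{(i)}$). Since the $v_j^{(i)}$ do not involve any of the $x_j^{(i)}$, they commute past the partial derivatives, and a $k$-fold product of them contributes the global factor $e^{k \xi t}$. The Taylor expansion therefore collapses to
\[
  P(\mathbf{x} + \bm{\alpha} e^{\xi t}) = \sum_{k \geqslant 0} \frac{e^{k \xi t}}{k!} D_{\xi, \bm{\alpha}}^k(P),
\]
a finite sum cut off by $\deg P$.

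Then I would verify that $1, e^{\xi t}, e^{2\xi t}, \ldots$ are linearly independent over $k[\mathbf{x}^{(\infty)}, \bm{\alpha}, \xi]$ inside $k[\mathbf{x}^{(\infty)}, \bm{\alpha}][\![\xi, t]\!]$ by a Vandermonde argument: extracting the coefficients of $t^0, t^1, \ldots, t^N$ in any finite relation $\sum_{k = 0}^N f_k e^{k \xi t} = 0$ yields a Vandermonde system in $0, 1, \ldots, N$ which forces every $f_k$ to vanish. With this in hand, the equality in the statement holds if and only if $D_{\xi, \bm{\alpha}}^k(P) = 0$ for every $k \geqslant 2$, in which case the only surviving summand besides $P$ is $e^{\xi t} D_{\xi, \bm{\alpha}}(P)$, yielding the claimed form of $Q$.

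To finish, I would use that $D_{\xi, \bm{\alpha}}^k = D_{\xi, \bm{\alpha}}^{k-2} \circ D_{\xi, \bm{\alpha}}^2$ to reduce the vanishing of all higher powers to the single condition $D_{\xi, \bm{\alpha}}^2(P) = 0$, which by Lemma~\ref{lem:D2_property} is equivalent to $P \in (\mathcal{I}_n^{\arc})^\perp$. The main obstacle is essentially bookkeeping: matching the combinatorics of the Taylor expansion with the definition of $D_{\xi, \bm{\alpha}}$ and justifying the coefficient-wise extraction inside the formal power series ring; neither step is deep, but the proof hinges on carrying them out cleanly.
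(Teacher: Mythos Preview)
Your proposal is correct and follows essentially the same route as the paper: both compute the Taylor expansion $P(\mathbf{x} + \bm{\alpha} e^{\xi t}) = \sum_{k \geqslant 0} \frac{e^{k\xi t}}{k!} D_{\xi, \bm{\alpha}}^k(P)$ and then invoke Lemma~\ref{lem:D2_property}. You spell out two steps the paper leaves implicit---the linear independence of the $e^{k\xi t}$ and the reduction from ``$D_{\xi,\bm{\alpha}}^k(P)=0$ for all $k\geqslant 2$'' to the single condition $D_{\xi,\bm{\alpha}}^2(P)=0$---but the underlying argument is the same.
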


\begin{proof}
First we observe that, for every $P \in k[\bx^{(\infty)}]$, we have
\[
 \frac{\partial}{\partial (e^{\xi t})} P(\bx + \bm{\alpha}e^{\xi t}) = D_{\xi, \bm{\alpha}} P(\bx + \bm{\alpha}e^{\xi t}).
\]
Therefore, we can write the Taylor expansion of $P(\bx + \bm{\alpha}e^{\xi t})$ with respect to $e^{\xi t}$ using the operator $D_{\xi, \bm{\alpha}}$:
\[
P(\bx + \bm{\alpha} e^{\xi t}) = P(\bx) + e^{\xi t} D_{\xi, \bm{\alpha}}P(\bx) + \frac{e^{2\xi t}}{2} D_{\xi, \bm{\alpha}}^2P(\bx) + \ldots.
\]
The statement of the lemma follows from the expansion above and Lemma~\ref{lem:D2_property}.
\end{proof}

\begin{corollary}\label{cor:vanish}
    Let $P \in (\mathcal{I}_n^{\arc})^\perp$ be homogeneous of degree $d + 1$.
    Then, for any $\xi_1, \ldots, \xi_d \in k$ and $\bm{\alpha_1}, \ldots, \bm{\alpha_d} \in k^n$, we have
    \[
    P(\bm{\alpha_1}e^{\xi_1 t} + \ldots + \bm{\alpha_d}e^{\xi_d t}) = 0.
    \]
\end{corollary}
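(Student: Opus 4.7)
The plan is to introduce scalar variables $s_1, \ldots, s_d$ and study
\[
R(s_1, \ldots, s_d) := P\!\left(s_1 \bm{\alpha_1} e^{\xi_1 t} + \ldots + s_d \bm{\alpha_d} e^{\xi_d t}\right).
\]
Showing that $R \equiv 0$ as a polynomial in $s_1, \ldots, s_d$ and then specializing at $s_1 = \cdots = s_d = 1$ will yield the corollary. The argument combines a trivial homogeneity observation with a multilinearity bound coming from Lemma \ref{lem:LinearityMulti}.

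First, I note that $R$ is homogeneous of degree $d+1$ in $(s_1, \ldots, s_d)$: scaling every $s_i$ by $\lambda$ scales the argument of $P$ by $\lambda$, and $P$ is homogeneous of degree $d+1$.

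Next, I will show that $\deg_{s_j} R \leqslant 1$ for each $j$. To do so, I fix $j$, set $\tilde{\bx} := \sum_{i \neq j} s_i \bm{\alpha_i} e^{\xi_i t}$ (which does not involve $s_j$), and specialize the identity of Lemma \ref{lem:LinearityMulti} via $\bx \mapsto \tilde{\bx}$ and $\bm{\alpha} \mapsto s_j \bm{\alpha_j}$. Using the $\bm{\alpha}$-linearity of $D_{\xi, \bm{\alpha}}$, the outcome should read
\[
R = P(\tilde{\bx}) + s_j \, e^{\xi_j t}\, \bigl(D_{\xi_j, \bm{\alpha_j}} P\bigr)(\tilde{\bx}),
\]
in which the first summand is free of $s_j$ and the second is linear in $s_j$. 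Combining $\deg_{s_j} R \leqslant 1$ for every $j$ with the fact that $R$ is homogeneous of degree $d+1$ in only $d$ variables then forces $R \equiv 0$, since $d+1$ exceeds the total degree obtainable under the multilinearity constraint.

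The main obstacle will be a small but important bookkeeping check: Lemma \ref{lem:LinearityMulti} is stated as an identity in the universal ring $k[\bx^{(\infty)}, \bm{\alpha}][\![\xi, t]\!]$, and one must verify that it remains valid after the specialization $\bx \mapsto \tilde{\bx}$. This reduces to observing that $x_j^{(i)} \mapsto \tilde{x}_j^{(i)}$ extends to a derivation-compatible ring homomorphism into the ambient formal power series ring, after which both sides of the identity transform correctly under the substitution. Once that is in place, the multilinearity-versus-homogeneity contradiction above finishes the proof.
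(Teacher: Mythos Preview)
Your proposal is correct and follows essentially the same approach as the paper: both arguments use Lemma~\ref{lem:LinearityMulti} to show that a tag variable attached to each exponential summand enters with degree at most one, then conclude from homogeneity of degree $d+1$ in only $d$ such tags that the expression vanishes. The only cosmetic difference is that the paper treats $\xi_1,\ldots,\xi_d$ as unknowns and tracks degree in the $e^{\xi_j t}$ directly, whereas you introduce auxiliary scalars $s_1,\ldots,s_d$; your version has the mild advantage that it handles the case of coinciding $\xi_j$'s without needing a genericity-then-specialize step.
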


\begin{proof}
    We consider $\xi_1, \ldots, \xi_d, \bm{\alpha_1}, \ldots, \bm{\alpha_d}$ as unknowns.
    Let $\bx^\ast = \bm{\alpha_1}e^{\xi_1 t} + \ldots + \bm{\alpha_{d - 1}}e^{\xi_{d - 1} t}$
    By Lemma~\ref{lem:LinearityMulti} we have  
    \[
        P(\bx^\ast + \bm{\alpha_d}e^{\xi_d t}) = P(\bx^\ast) + e^{\xi_d t} D_{\xi_d, \bm{\alpha_d}}P(\bx^\ast).
    \]   
    This implies that $e^{\xi_{d} t }$ appears in $P(\bm{\alpha_1}e^{\xi_1 t} + \ldots + \bm{\alpha_d}e^{\xi_d t})$ with degree at most $1$. 
    Therefore, for every $1 \leqslant j \leqslant d$, we have   $e^{\xi_j t }$ appearing with degree at most $1$. 
    However, $P$ is homogeneous of degree $d + 1$, so it mush vanish on $\bm{\alpha_1}e^{\xi_1 t} + \ldots + \bm{\alpha_d}e^{\xi_d t}$.
\end{proof}

From this, we will deduce that every element of $(\mathcal{I}_n^{\arc})^\perp$ of degree $d$ vanishes on solutions of low order linear differential equations.

\begin{proposition}\label{prop:vanish_on_subspace}
    Let $V \subset k[\![t]\!]$ be a subspace of dimension $d$ closed under differentiation.
    Then every homogeneous $P \in (\mathcal{I}_n^{\arc})^\perp$ of degree $d + 1$ vanishes on any point of $V^n$.
\end{proposition}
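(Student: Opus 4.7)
The plan is to reduce the statement to Corollary~\ref{cor:vanish} via a Zariski density argument. Every $d$-dimensional $d/dt$-invariant subspace $V \subset k[\![t]\!]$ arises as the kernel of a unique monic polynomial operator $F(d/dt)$ with $\deg F = d$, namely the characteristic polynomial of $d/dt|_V$. Since Corollary~\ref{cor:vanish} already handles the generic case where $F$ has distinct roots (in which case the solution space is spanned by $d$ distinct exponentials), it should suffice to extend to all $F$ by algebraic density.

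First I would extend scalars and assume, without loss of generality, that $k$ is algebraically closed; this is harmless since vanishing of $P$ on $(V \otimes_k \bar{k})^n$ implies vanishing on $V^n$. Next I would verify that for $V$ of dimension $d$ closed under differentiation, any $y \in V$ is uniquely determined by its initial data $(y(0), y'(0), \ldots, y^{(d-1)}(0))$ through the linear recursion coming from $F(d/dt)y = 0$, so that the Taylor coefficients of $y$ are polynomials in these $d$ initial conditions and in the $d$ coefficients of $F$.

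Then I would introduce indeterminates $c_0, \ldots, c_{d-1}$ for the coefficients of a generic monic $F$ of degree $d$, together with indeterminates $y_{i,0}, \ldots, y_{i,d-1}$ for $i = 1, \ldots, n$ encoding the initial data of generic $y_i \in \ker F(d/dt)$. Then $P(y_1, \ldots, y_n)$ lies in $k[\bm{c}, \bm{y}][\![t]\!]$, and each Taylor coefficient in $t$ is a polynomial in $(\bm{c}, \bm{y})$; it suffices to show each such polynomial vanishes identically. On the Zariski-open dense subset of $\bm{c}$ where the discriminant of $F$ is nonzero, $F$ has $d$ distinct roots $\xi_1, \ldots, \xi_d \in k$ and $\ker F(d/dt)$ has basis $e^{\xi_1 t}, \ldots, e^{\xi_d t}$; any $y_i \in \ker F(d/dt)$ is then expressible as $\sum_j (\bm{\alpha_j})_i e^{\xi_j t}$, so Corollary~\ref{cor:vanish} gives $P(y_1, \ldots, y_n) = 0$. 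Zariski density then forces vanishing for all $(\bm{c}, \bm{y})$, which specializes to the desired conclusion for our specific $V$.

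The main obstacle I expect is not conceptual but rather careful bookkeeping. In particular, one needs to check that the minimal polynomial of $d/dt|_V$ really has degree exactly $d$ so that $V = \ker F(d/dt)$ (which follows from $\dim V = d$ together with the fact that the kernel of a degree-$m$ linear constant-coefficient ODE in $k[\![t]\!]$ has dimension $m$), and to justify that vanishing on a Zariski dense set of parameter values in $(\bm{c}, \bm{y})$ forces identical vanishing of each of the Taylor coefficient polynomials.
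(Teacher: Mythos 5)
Your proof is correct and takes a genuinely different, more algebraic route than the paper for the non-semisimple case. The paper invokes the cyclic vector theorem to write $V$ as the span of derivatives of a single series, splits on whether the characteristic roots are simple, and in the multiple-root case works over $\mathbb{C}$: it realizes $t^j e^{\mu t}$ as an explicit limit $N \to \infty$ of linear combinations of the perturbed exponentials $e^{(\mu + r/N)t}$, uses uniform convergence on bounded domains to commute the limit with derivation, and finally transfers the resulting identity from $\mathbb{C}$ to arbitrary $k$ of characteristic zero by observing that it is a polynomial identity with rational coefficients. Your argument replaces both the analytic limit and the field transfer with a single Zariski-density argument in the parameter space $(\bm{c}, \bm{y})$ of ODE coefficients and initial data: each Taylor coefficient of $P(y_1, \ldots, y_n)(t)$ is a polynomial in $(\bm{c}, \bm{y})$, and each such polynomial vanishes on the dense open locus where the discriminant of $F$ is nonzero (where Corollary~\ref{cor:vanish} applies after passing to the exponential basis), hence vanishes identically and in particular at the specific $(\bm{c}, \bm{y})$ defining $V$. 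This is cleaner and also sidesteps the cyclic vector theorem, using instead Cayley--Hamilton and a dimension count to get $V = \ker F(d/dt)$. The bookkeeping you flag is indeed what needs to be written out carefully; one point worth making explicit is that passing from the initial data $\bm{y}$ to the exponential coefficients $\bm{\alpha_j}$ requires inverting a Vandermonde matrix in the roots $\xi_j$, which is exactly where nonvanishing of the discriminant is used.
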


\begin{proof}
    We fix a space $V$ and a polynomial $P$ as in the statement of the proposition.
    By the cyclic vector theorem~\cite{CHURCHILL2002}, there exists $f \in k[\![t]\!]$ satisfying a linear differential equation of order $d$ such that $V = \langle f, f', \ldots, f^{(d - 1)} \rangle$.
    Let $\mu_1, \ldots, \mu_\ell$ and $e_1, \ldots, e_\ell$ be the roots and their multiplicities of the characteristic polynomial of the minimal linear equation for $f$.
    Then the minimality implies that $\sum e_i = d$.
    
    If $e_1 = \ldots = e_\ell = 1$, then $\ell = d$ and every element of $V$ is a linear combination $e^{\mu_1 t}, \ldots, e^{\mu_\ell t}$.
    By Corollary~\ref{cor:vanish}, $P$ vanishes on any $n$-tuple of linear combinations of these exponentials.

    Now we consider the case when at least one of $e_i$'s is greater than one.
    In this case, any element of $V$ is a quasi-polynomial of the form $\sum\limits_{i = 1}^\ell \sum\limits_{j = 0}^{e_i - 1}a_{i, j} t^j e^{\mu_i t}$.
    We will first consider the case $k = \mathbb{C}$, and
    we claim that any such function can be represented as a point-wise limit of linear combinations of at most $d$ exponentials.
    Indeed, using the Taylor expansion we can write $e^{(\mu_i + \frac{r}{N})t} $ for every $r < j$ as:
    \[
    e^{(\mu_i + \frac{r}{N}t)} = e^{\mu_i t} + \frac{r}{N} te^{\mu_i} + \ldots + \frac{r^j}{j! N^j} t^j e^{\mu_i t} + o\left( \frac{1}{N^j}\right).
    \]
   These equations for $0 \leqslant r \leqslant j$ allow us to express $t^j e^{\mu_i t}$ as follows:
    \[
    t^je^{\mu_i t} = \lim_{N \rightarrow \infty}
    \begin{pmatrix}
        1 & 0 & \ldots & 0
    \end{pmatrix}
    \begin{pmatrix}
        \frac{1}{j!}\left(\frac{j}{N}\right)^j & \frac{1}{(j - 1)!}\left(\frac{j}{N}\right)^{(j - 1)} & \ldots & 1\\
        \frac{1}{j!}\left(\frac{j - 1}{N}\right)^j & \frac{1}{(j - 1)!}\left(\frac{j - 1}{N}\right)^{(j - 1)} & \ldots & 1\\
        \vdots & \ddots & \ddots & \vdots \\
        0 & 0 & \ldots & 1
    \end{pmatrix}^{-1} 
    \begin{pmatrix}
        e^{(\mu_i + \frac{j}{N})t}\\
        e^{(\mu_i + \frac{j - 1}{N})t}\\
        \vdots\\
        e^{\mu_i t}
    \end{pmatrix}.
    \]
    Therefore, the whole linear combination $\sum\limits_{i = 1}^\ell \sum\limits_{j = 0}^{e_i - 1}a_{i, j} t^j e^{\mu_i t}$ can be written as a limit $N \to \infty$ of linear combinations of $\{e^{(\mu_i + j/N) t} \mid 1 \leqslant i \leqslant \ell, \; 0 \leqslant j < e_i\}$. We take a tuple $\bx^\ast \in V^n$ and approximate it by such a sequence of linear combinations of exponentials $\bx^\ast_N \to \bx^\ast$.
    Since all the considered functions are smooth, the limit is uniform on bounded domains and, therefore, commutes with derivation.
    Thus $P(\bx^\ast_N) \to P(\bx^\ast)$, so the latter is equal to zero.

    We finish by extending the proof for the case when at least one of $e_i$ is greater than one to an arbitrary ground field $k$ of characteristic zero.
    We observe that the property $P \in (\mathcal{I}_n^{\arc})^\perp$ is defined by a system of linear equations over $\mathbb{Q}$ on the coefficients of $P$.
    Therefore, we can express $P$ as a linear combination of elements of $(\mathcal{I}_n^{\arc})^\perp$ with the coefficients in $\mathbb{Q}$, so we will further assume that $P$ has coefficients in $\mathbb{Q}$. 
    If we consider $\mu_i$'s and $a_{i, j}$'s as variables and plug expressions of the form $\sum\limits_{i = 1}^\ell \sum\limits_{j = 0}^{e_i - 1}a_{i, j} t^j e^{\mu_i t}$ to $P(\bx)$, the result will be a combinations of different products of the exponentials with the coefficients being polynomials in $\mu_i$'s and $a_{i, j}$'s. 
    These polynomials vanish over $\mathbb{C}$, so they are identically zero and, thus, vanish over any field $k$.
\end{proof}

\begin{definition}
    For any positive integers $n$ and $h$,  we define the infinite Hankel matrix as follows:
\begin{align}\label{eq : HankelMulti}
\mathbf{H}_{n, h}:= \begin{pmatrix}
x_1 & x_2 & \cdots & x_n & x_1' & x_2' &  \cdots & \cdots\\
x_1' & x_2'& \cdots & x_n' &  x_1'' &  x_2'' & \cdots & \cdots\\
x_1'' & x_2'' & \cdots & x_n'' & x_1^{(3)} & x_2^{(3)} & \cdots & \cdots\\
\vdots & \vdots & \vdots& \vdots  & \vdots & \vdots & \vdots & \cdots\\
x_1^{(h - 1)} & x_2^{(h -1)} & \cdots & x_n^{(h - 1)}& x_1 ^{(h)} & x_2^{(h)} & \cdots & \cdots
\end{pmatrix}
\end{align}
Since $\mathbf{H}_{n, h}$ is a submatrix of $\mathbf{H}_{n, h + 1}$, an infinite-by-infinite matrix $\mathbf{H}_{n, \infty}$ can be defined by taking $h \to \infty$.
\end{definition}

\begin{proposition}\label{pro: HanWron2 }
Every minor of the infinite Hankel matrix~\eqref{eq : HankelMulti} belongs to $(\mathcal{I}_n^{\arc})^\perp$.
\end{proposition}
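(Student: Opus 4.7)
The plan is to verify the criterion of Lemma~\ref{lem:LinearityMulti}: for an arbitrary minor $M$ of $\mathbf{H}_{n,\infty}$ I will show that $M(\mathbf{x} + \bm{\alpha}e^{\xi t})$ is linear in $e^{\xi t}$, which by that lemma is equivalent to $M \in (\mathcal{I}_n^{\arc})^\perp$. To set notation, I would fix an $\ell \times \ell$ submatrix $A$ obtained by choosing rows $0 \leqslant i_1 < i_2 < \cdots < i_\ell$ and columns corresponding to variables $y_r := x_{j_r}^{(s_r)}$ for $r = 1, \ldots, \ell$. By the Hankel structure of $\mathbf{H}_{n,\infty}$, the entry $A_{a,r}$ is precisely $x_{j_r}^{(s_r + i_a)}$.

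The substitution $\mathbf{x} \mapsto \mathbf{x} + \bm{\alpha}e^{\xi t}$ sends $x_{j_r}^{(s_r + i_a)}$ to $x_{j_r}^{(s_r + i_a)} + \alpha_{j_r}\xi^{s_r + i_a}e^{\xi t}$, so the substituted submatrix becomes $A + e^{\xi t} C$, where $C_{a,r} = \alpha_{j_r}\xi^{s_r + i_a}$. Multilinearity of the determinant in the columns then gives
\[
\det(A + e^{\xi t} C) \;=\; \sum_{S \subseteq \{1, \ldots, \ell\}} (e^{\xi t})^{|S|} \det M_S,
\]
where $M_S$ is the matrix whose $r$-th column is taken from $C$ if $r \in S$ and from $A$ otherwise. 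The crucial observation is the factorization $C_{a,r} = \xi^{i_a} \cdot \alpha_{j_r}\xi^{s_r}$: every column of $C$ is a scalar multiple of the single vector $(\xi^{i_1}, \ldots, \xi^{i_\ell})^\top$, so $C$ has rank at most one. This is exactly what the Hankel shape gives us, namely the separation of the row-index factor from the column-index factor. Hence for $|S| \geqslant 2$ the matrix $M_S$ has two proportional columns and $\det M_S = 0$, leaving only the contributions with $|S| \in \{0, 1\}$.

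What remains is the observation that the expansion above coincides with the $D_{\xi,\bm{\alpha}}$-Taylor expansion used in the proof of Lemma~\ref{lem:LinearityMulti} (by linear independence of $1, e^{\xi t}, e^{2\xi t}, \ldots$), so the vanishing of the coefficient at $(e^{\xi t})^2$ matches $D_{\xi,\bm{\alpha}}^2 M = 0$. Thus $M \in (\mathcal{I}_n^{\arc})^\perp$ as desired. I do not foresee a significant obstacle; the argument reduces entirely to a rank-one perturbation of a determinant, and identifying the rank-one structure of $C$ is the only nontrivial step, immediately forced by the Hankel pattern.
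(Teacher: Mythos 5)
Your proof is correct and is essentially the same argument as the paper's: both expand the determinant of the perturbed Hankel submatrix by multilinearity, observe that the Hankel structure forces the perturbation columns to be proportional (equivalently, that the perturbation matrix $C$ has rank one), so all terms of degree $\geqslant 2$ in $e^{\xi t}$ vanish, and then invoke Lemma~\ref{lem:LinearityMulti}. Your phrasing of the key step as a rank-one perturbation via the factorization $C_{a,r}=\xi^{i_a}\cdot\alpha_{j_r}\xi^{s_r}$ is a slightly crisper way to state the same observation that the paper expresses as ``two such columns are proportional.''
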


\begin{proof}
Consider one such $\ell \times \ell$-minor.
It can be written as $\det (x_{s_j}^{(a_i + b_j)})_{i, j}$, where $(a_1, \ldots, a_\ell)$, $(b_1, \ldots, b_\ell)$, and $(s_1, \ldots, s_{\ell})$ are tuples of integers.
We denote this determinant by $W(\bx)$. 
Since $(\bx + \bm{\alpha} e^{\xi t})^{(k)} = \bx^{(k)} + \xi^k \bm{\alpha} e^{t \xi}$ we can write $W( \bx + \bm{\alpha} e^{t \xi})$ using the linearity of the determinant as follows:
\[
 W( \bx + \bm{\alpha} e^{t \xi}) = W(\bx) + e^{t \xi} S_1 + e^{2t \xi} S_2 + \ldots,
\]  
 where $S_1$ is a sum of determinants given by replacing a column 
 \[ ^\mathsf{t}\begin{pmatrix}
 x_{s_j}^{(b_j + a_1)}, \ \ 
 x_{s_j}^{(b_j + a_2)}, \ \ 
 \cdots, & 
 x_{s_j}^{(b_j + a_\ell)}
 \end{pmatrix}
 \]
 of $\Wr(\bx)$ by 
 \[^\mathsf{t}\begin{pmatrix}
 \alpha_{s_j}\xi^{b_j + a_1},&
 \alpha_{s_j}\xi^{b_j + a_2},&
 \cdots,&
  \alpha_{s_j}\xi^{b_j + a_\ell}
 \end{pmatrix},\]
 and $S_2$ is a sum of determinants given by performing two such replacements in $W(\bx)$, and so on for all $S_k$ where $k \geqslant 1$.
 Since  two columns  of the form  \[^\mathsf{t}\begin{pmatrix}
 \alpha_{s_j}\xi^{b_j + a_1},&
 \alpha_{s_j}\xi^{b_j + a_2},&
 \cdots,&
  \alpha_{s_j}\xi^{b_j + a_\ell}
 \end{pmatrix},\,  
 ^\mathsf{t}\begin{pmatrix}
 \alpha_{s_k}\xi^{b_k + a_1},&
 \alpha_{s_k}\xi^{b_k + a_2},&
 \cdots,&
  \alpha_{s_k}\xi^{b_k + a_\ell}
 \end{pmatrix},\] are proportional, we have $S_k = 0$ for $k > 1$ and, thus
 \[
 W(\bx + e^{\xi t}) = W(\bx) + e^{\xi t} S_1. 
 \]
Hence by Lemma~\ref{lem:LinearityMulti} we conclude that $W(\bx) \in (\mathcal{I}_n^{\arc})^{\perp}$.
\end{proof}

\begin{lemma}\label{lemma:minors}
Consider the infinite Hankel matrix $
\mathbf{H}_{n, h}$ and let $\mathbf{J}_{n, h}$ the ideal generated by the $h \times h$ minors of $\mathbf{H}_{n, h}$. Then $\mathbf{J}_{n, h}$ is a prime  differential ideal.
\end{lemma}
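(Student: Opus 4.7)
My plan is to establish the two properties — stability under the derivation and primality — separately, exploiting the column structure of $\mathbf{H}_{n,h}$. Write $v_{j,k}$ for the column of $\mathbf{H}_{n,h}$ indexed by $1 \leqslant j \leqslant n$ and $k \geqslant 0$; its entries from top to bottom are $x_j^{(k)}, x_j^{(k+1)}, \ldots, x_j^{(k+h-1)}$. Two elementary observations drive the whole argument: first, $v_{j,k}' = v_{j,k+1}$, so the derivation permutes the columns of $\mathbf{H}_{n,h}$ among themselves; second, the $h\times h$ minor built from columns $(s_1,b_1),\ldots,(s_h, b_h)$ is precisely the Wronskian $\Wr(x_{s_1}^{(b_1)}, \ldots, x_{s_h}^{(b_h)})$, tying this lemma directly to the main theme of the paper.

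For the differential-ideal part, the column-wise Leibniz rule for determinants yields
\[
\det(v_{j_1,k_1},\ldots,v_{j_h,k_h})' \;=\; \sum_{\ell=1}^h \det\bigl(v_{j_1,k_1},\ldots,v_{j_\ell,k_\ell+1},\ldots,v_{j_h,k_h}\bigr),
\]
and every summand on the right is itself an $h\times h$ minor of $\mathbf{H}_{n,h}$ (some terms vanish whenever two columns collide). Hence $\mathbf{J}_{n,h}$ is closed under differentiation.

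For primality, I would invoke Eisenbud's theorem asserting that the ideal of maximal minors of a $1$-generic matrix of linear forms is prime. The task then reduces to verifying $1$-genericity for each finite truncation $\mathbf{H}_{n,h}^{[q]}$ consisting of the first $nq$ columns: for every non-zero column combination $\sum_\ell \beta_\ell v_{j_\ell, k_\ell}$, the resulting $h$ entries must be $k$-linearly independent. Encoding the data by $\Lambda(z) := \sum_{i=1}^h \lambda_i z^{i-1}$ and $C_j(z) := \sum_{\ell : j_\ell = j} \beta_\ell z^{k_\ell}$, a putative dependence $\sum_i \lambda_i \cdot (\text{$i$-th entry}) = 0$ translates into the polynomial identities $\Lambda(z)\, C_j(z) = 0$ in $k[z]$ for every $j$; since $k[z]$ is a domain, either $\Lambda$ vanishes (no dependence) or all $C_j$ vanish (trivial combination). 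Finally, $\mathbf{J}_{n,h}$ is the ascending union over $q$ of the extensions of these prime truncation ideals to $k[\mathbf{x}^{(\infty)}]$; these extensions remain prime since polynomial extension is flat, and an increasing chain of primes has prime union. The main technical step is the polynomial encoding behind $1$-genericity; the differential closure and the passage from finite truncations to $\mathbf{H}_{n,h}$ are routine by comparison.
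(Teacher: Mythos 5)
Your proof is correct and follows the same overall strategy as the paper: differential stability via the column-wise Leibniz rule (where each resulting summand is again a maximal minor), primality by reducing to Eisenbud's theorem on maximal minors of $1$-generic matrices applied to finite truncations, and then passing to the ascending union. The one place you genuinely diverge is in verifying $1$-genericity: the paper observes that, after permuting columns, the truncated matrix $\mathbf{H}_{n,h,k}$ is a block concatenation of $n$ Hankel matrices in disjoint sets of variables and invokes the known fact that Hankel matrices are $1$-generic, whereas you give a direct and self-contained verification via the generating-function encoding $\Lambda(z)\,C_j(z)=0$ and the integrality of $k[z]$. Your version is arguably cleaner, since the paper's argument implicitly requires the reader to supply the observation that concatenating $1$-generic blocks in disjoint variables preserves $1$-genericity, which is exactly what your polynomial identity encapsulates at once. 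One small imprecision worth flagging: the primality of the extension of $\mathbf{J}_{n,h,k}$ from $k[\mathbf{x}^{(\leqslant N)}]$ to the larger polynomial ring does not follow from flatness alone (flat base change need not preserve primality); the correct justification is that the quotient by the extended ideal is a polynomial ring over the domain $k[\mathbf{x}^{(\leqslant N)}]/\mathbf{J}_{n,h,k}$ and hence itself a domain. This is standard and does not affect the soundness of your argument.
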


The proof of Lemma \ref{lemma:minors} uses the concept of $1$--generic matrices. For more details see \cite{eisenbud, guerrieri, Eisenbud2}). 

\begin{definition}
We say that a matrix $M$ with the entries being linear forms in some variables is $1$--generic if for every nonzero vectors $v_1$ and $v_2$ over the ground field:
\[ 
^\mathsf{t}v_1 M v_2 \neq 0.
\]
\end{definition}

\begin{remark}
Hankel matrices are examples of $1$--generic matrices~\cite[p. 549]{eisenbud}. 
Moreover the maximal minors of a $1$--generic matrix form a prime ideal \cite[Theorem~1]{Eisenbud2}.
\end{remark}

\begin{proof}[Proof of Lemma \ref{lemma:minors}]
    For every $k\ge h - 1$  the following matrix:
    \tiny
\[
\setcounter{MaxMatrixCols}{20}
\mathbf{H}_{n, h,k}:= \begin{pmatrix}
x_1 & x_2 & \cdots & x_n & x_1' & x_2' &  \cdots & x_n' & \cdots  & x_1^{(k)} & x_2^{(k)} & \cdots & x_n^{(k)}\\
x_1' & x_2'& \cdots & x_n' &  x_1'' &  x_2'' & \cdots & x_n'' & \cdots  & x_1^{(k + 1)} & x_2^{(k + 1)} & \cdots & x_n^{(k + 1)}\\
x_1'' & x_2'' & \cdots & x_n'' & x_1^{(3)} & x_2^{(3)} & \cdots & x_n^{(3)} & \cdots  & x_1^{(k + 2)} & x_2^{(k + 2)} & \cdots & x_n^{(k + 2)}\\
\vdots & \vdots & \vdots& \vdots & \vdots & \vdots & \vdots & \vdots & \vdots  & \vdots & \vdots & \vdots & \vdots\\
 x_1^{(h - 1)} & x_2^{(h - 1)} & \cdots & x_n^{(h - 1)} & x_1^{(h )} & x_2^{(h)} &  \cdots & x_n^{(h)} & \cdots  & x_1^{(h - 1 + k)} & x_2^{( h - 1 + k)} & \cdots & x_n^{( h - 1 + k)}
\end{pmatrix}
\]
\normalsize
is $1$--generic since $\mathbf{H}_{n, h,k}$ is block matrix of Hankel matrices (up to exchanging columns). Therefore the maximal minors of $\mathbf{H}_{n, h,k}$ form a prime ideal $\mathbf{J}_{n, h,k}$. 
Hence the ideal $\mathbf{J}_{n, h} = \bigcup_k \mathbf{J}_{n, h,k}$ is a prime ideal.
Furthermore, since a derivative of every maximal minor of $\mathbf{H}_{n, h}$ is a linear combination of maximal minors of the same matrix, this ideal is a differential ideal.
\end{proof}

\begin{theorem}\label{thm: SolvI2KABIRA}
Let $P \in k[\bf{x}^{(\infty)} ]$ such that $P \in  (\mathcal{I}_n^{\arc})^\perp$.  Then $P$ is a linear combination  of Wronskians $\Wr(S)$, where $S$ ranges over all finite subsets~$S \subset \mathbf{x}^{(\infty)}$.
\end{theorem}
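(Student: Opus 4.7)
The plan is to reduce to a homogeneous $P$ of degree $d+1$, prove $P \in \mathbf{J}_{n, d+1}$ (the prime ideal of Lemma~\ref{lemma:minors}), and observe that this immediately yields the Wronskian decomposition. The reduction is automatic: $\mathcal{I}_n^{\arc}$ is a homogeneous ideal and the pairing $f \bullet P$ is degree-shifting by $-\deg f$, so $(\mathcal{I}_n^{\arc})^\perp$ is a graded $k$-subspace and we may assume $P$ is homogeneous of some degree $d+1 \geqslant 1$. The generators of $\mathbf{J}_{n, d+1}$ are the $(d+1)\times(d+1)$ minors of $\mathbf{H}_{n, d+1}$, homogeneous of degree exactly $d+1$ and equal to the Wronskians $\Wr(x_{i_1}^{(j_1)}, \ldots, x_{i_{d+1}}^{(j_{d+1})})$; hence any homogeneous degree-$(d+1)$ element of $\mathbf{J}_{n, d+1}$ is automatically a $k$-linear combination of Wronskians, which is what the theorem asserts.

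For the main step, I would realise a generic point of $\mathbf{J}_{n, d+1}$ over a transcendental extension and show $P$ vanishes at it. Introduce independent transcendentals $\mu_1, \ldots, \mu_d$ and $\{\lambda_{i, k}\}_{i \leqslant n,\, k \leqslant d}$ over $k$, set $K := k(\mu_1, \ldots, \mu_d, \{\lambda_{i, k}\})$, and let $y_i(t) := \sum_{k = 1}^d \lambda_{i, k}\, e^{\mu_k t} \in K[\![t]\!]$. These $y_i$ lie in the $d$-dimensional $K$-subspace $\langle e^{\mu_1 t}, \ldots, e^{\mu_d t}\rangle_K$ closed under $d/dt$, so Proposition~\ref{prop:vanish_on_subspace} (applied after enlarging the base field from $k$ to $K$, justified by the coefficient-over-$\mathbb{Q}$ argument at the end of its proof) forces $P(y_1, \ldots, y_n) = 0$ in $K[\![t]\!]$. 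Specialising at $t = 0$ then gives $\alpha(P) = 0$ for the $K$-point $\alpha \colon k[\mathbf{x}^{(\infty)}] \to K$ defined by $x_i^{(j)} \mapsto a_{i, j} := \sum_k \lambda_{i, k}\, \mu_k^j$.

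To close the argument I would verify $\ker \alpha \subseteq \mathbf{J}_{n, d+1}$. Since $P$ involves only finitely many variables, pick $N \geqslant 2d$ with $P \in k[\mathbf{x}^{(\leqslant N)}]$ and work in this Noetherian subring. The restricted kernel of $\alpha$ is prime (its target is a field) and contains the prime $\mathbf{J}_{n, d+1, N-d}$ of truncated maximal minors from the proof of Lemma~\ref{lemma:minors}, by the same Vandermonde multilinearity argument as in Proposition~\ref{pro: HanWron2 } specialised at $t = 0$. Both primes cut out irreducible subvarieties of $\Spec k[\mathbf{x}^{(\leqslant N)}]$ of dimension $d(n+1)$: on the $\alpha$ side by checking that the Jacobian of $(a_{i, j})_{i \leqslant n,\, j \leqslant N}$ with respect to the $d(n+1)$ transcendentals has full rank (a Vandermonde computation, using $N \geqslant 2d$); on the $\mathbf{J}_{n, d+1, N-d}$ side via the natural parameterisation using $d$ parameters for the $d$-dimensional differentially-invariant subspace plus $nd$ for the coefficients of the $n$ chosen elements within it. As $k[\mathbf{x}^{(\leqslant N)}]$ is catenary, the containment of two equi-dimensional primes is an equality, so $P \in \mathbf{J}_{n, d+1, N-d} \subseteq \mathbf{J}_{n, d+1}$. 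The principal obstacle is exactly this final dimension-count step: clean in concept, but requiring the truncated-Hankel primality from Lemma~\ref{lemma:minors} together with careful parameter bookkeeping on both sides.
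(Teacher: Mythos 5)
Your overall strategy matches the paper's: reduce to homogeneous $P$ of degree $d+1$, show $P$ lies in the prime ideal $\mathbf{J}_{n, d+1}$ of Lemma~\ref{lemma:minors}, and conclude by matching degrees of generators. Where you diverge from the paper is in the middle step, and the divergence costs you.

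The paper's proof applies Proposition~\ref{prop:vanish_on_subspace} to \emph{every} $\overline{k}$-point of $V(\mathbf{J}_{n, d+1})$: such a point is precisely an $n$-tuple of power series whose derivatives span a subspace of dimension at most $d$, which is exactly the hypothesis of the Proposition. This gives $P \equiv 0$ on the whole variety, and then the Nullstellensatz (in a finite truncation, using that $\mathbf{J}_{n, d+1}$ is prime hence radical) yields $P \in \mathbf{J}_{n, d+1}$ at once. You instead evaluate $P$ only at a single generic \emph{exponential} point $\alpha$ over a transcendental extension --- so you are only using Corollary~\ref{cor:vanish}, not the full strength of Proposition~\ref{prop:vanish_on_subspace} --- and then attempt to recover the conclusion $P \in \mathbf{J}_{n, d+1, N-d}$ by arguing that $\ker\alpha$ and $\mathbf{J}_{n, d+1, N-d}$ are primes of the same dimension.

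This is where the gap is. You have $\mathbf{J}_{n, d+1, N-d} \subseteq \ker\alpha$, hence $\dim V(\ker\alpha) \leqslant \dim V(\mathbf{J}_{n, d+1, N-d})$. The parameter count gives you $\dim V(\ker\alpha) \leqslant d(n+1)$, but it does \emph{not} give you the needed upper bound $\dim V(\mathbf{J}_{n, d+1, N-d}) \leqslant d(n+1)$. To obtain that bound you would need to show that every $\overline{k}$-point of $V(\mathbf{J}_{n, d+1, N-d})$ lies in the closure of the exponential parameterization --- i.e., that quasi-polynomial solutions of linear recurrences are limits of pure-exponential ones. But that confluence argument is precisely what the proof of Proposition~\ref{prop:vanish_on_subspace} already contains; by routing through the generic point you discard the part of the Proposition that handles repeated roots and then find you must reconstruct it by hand to close the dimension count. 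You also do not need catenarity: two comparable primes defining irreducible subvarieties of equal dimension are equal by elementary dimension theory. The fix is to use Proposition~\ref{prop:vanish_on_subspace} as the paper does --- for an arbitrary $\overline{k}$-point of $V(\mathbf{J}_{n, d+1})$, not just the generic exponential one --- and then the Nullstellensatz and primality finish the argument with no dimension bookkeeping at all.
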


\begin{proof}
Let $P \in (\mathcal{I}_n^{\arc})^\perp $, without loss of generality we can suppose that $P$ is homogeneous of degree $d$. Let $\bx(t) := (x_1(t), \ldots, x_n(t))$ be an $n$-tuple of power series from $\overline{k}[\![t]\!]$ which belongs to $V(\mathbf{J}_{n, d})$.
The fact that $\bx(t)$ satisfies every equation in $V(\mathbf{J}_{n, d})$ implies that the dimension of the vector space spanned by $x_1(t), \ldots, x_n(t)$ and their derivatives is less than $d$.
Then, by Proposition~\ref{prop:vanish_on_subspace}, $P$ vanishes at $\bx(t)$.
Therefore, by the Hilbert's Nullstellensatz, $P$ belongs to $ \mathbf{J}_{n, d}$.
Since the generators of $ \mathbf{J}_{n, d}$ are homogeneous of degree $d$ as well as $P$, $P$ must be a linear combination of the generators, that is, of the maximal minors of $\mathbf{H}_{n, d}$. 
\end{proof}

Proposition~\ref{pro: HanWron2 } and Theorem~\ref{thm: SolvI2KABIRA} imply:

\begin{corollary}\label{cor:inf_by_inf}
The vector space $(\mathcal{I}_n^{\arc})^\perp$ is equal to the vector space spanned by the minors of $\mathbf{H}_{n, \infty}$.
\end{corollary}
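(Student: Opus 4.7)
The plan is to establish a cyclic chain of inclusions among three vector spaces that forces them all to coincide. Let me denote by $M$ the span of all minors of $\mathbf{H}_{n,\infty}$ and by $W$ the span of the Wronskians $\Wr(S)$ for finite $S \subset \mathbf{x}^{(\infty)}$. Two of the three inclusions are already in hand: Proposition~\ref{pro: HanWron2 } gives $M \subseteq (\mathcal{I}_n^{\arc})^\perp$, and Theorem~\ref{thm: SolvI2KABIRA} gives $(\mathcal{I}_n^{\arc})^\perp \subseteq W$.

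The only remaining step is to verify $W \subseteq M$, that is, that each Wronskian is itself a minor of $\mathbf{H}_{n,\infty}$. For this I would simply unpack the definition in~\eqref{eq: HanWron}: if $S = \{x_{i_1}^{(j_1)}, \ldots, x_{i_\ell}^{(j_\ell)}\}$, then the $(r,k)$-entry of the Wronskian matrix is $(x_{i_k}^{(j_k)})^{(r-1)} = x_{i_k}^{(j_k + r - 1)}$, which is exactly the entry of $\mathbf{H}_{n,\infty}$ in row $r$ and in the column whose top entry is $x_{i_k}^{(j_k)}$. Hence $\Wr(S)$ coincides with the $\ell \times \ell$ maximal minor of $\mathbf{H}_{n,\infty}$ formed by its first $\ell$ rows and the columns indexed by the elements of $S$.

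Chaining $M \subseteq (\mathcal{I}_n^{\arc})^\perp \subseteq W \subseteq M$ forces equality throughout, which is the statement of the corollary. There is no real obstacle here; the content of the corollary is the pleasant bookkeeping observation that, although the Wronskians form only a restricted subfamily of the minors (those anchored at the top rows of $\mathbf{H}_{n,\infty}$), this subfamily is already rich enough to span the same vector space as the entire collection of minors, namely $(\mathcal{I}_n^{\arc})^\perp$ itself.
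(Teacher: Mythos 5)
Your proposal is correct and matches the paper's reasoning, which derives the corollary immediately from the two preceding results (every minor of the Hankel matrix lies in the inverse system; every element of the inverse system is spanned by Wronskians). You have merely made explicit the one implicit bookkeeping step---that each Wronskian $\Wr(S)$ is the minor of $\mathbf{H}_{n,\infty}$ on the first $|S|$ rows and the columns indexed by $S$---which closes the cyclic chain of inclusions.
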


\section{Inverse system and elimination}\label{sec:Trunc}

\begin{definition}
    Let $V$ be a finite dimensional subspace of $k[\bx]$ (with $\bx = (x_1, \ldots, x_n)$), that is stable under derivative with respect to any variable $x_1, \ldots, x_n$. We denote by $\mathfrak{I}(V)$ the ideal defined as follows:
    \[
        \mathfrak{I}(V) : = \left\{ f \in k[\bx]\, : \, \forall \, P \in V, f \bullet P = 0  \right\}.
    \] 
\end{definition}

\begin{proposition}
     Let $V$ be a finite dimensional subspace of $k[\bx]$, that is stable under derivative with respect to any variable $x_1, \ldots, x_n$, then
     \[
     \mathfrak{I}(V) =\left\{ f \in k[\bx]\, : \,\forall \, P \in V,  P \bullet f \in \mathfrak{m}  \right\}
     \]
\end{proposition}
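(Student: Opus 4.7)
The plan is to mimic the argument in the proof of Proposition~\ref{pro:dualdefinition}, replacing the closure of an ideal under multiplication by the closure of $V$ under partial derivatives. The central computation is a single coefficient-level identity: for any $f, P \in k[\bx]$ and any $\gamma \in \Znn^n$,
\[
[\bx^\gamma]\bigl(f \bullet P\bigr) \;=\; \frac{1}{\gamma!}\,\bigl((\bx^\gamma \bullet P) \bullet f\bigr)(0),
\]
where $[\bx^\gamma](\cdot)$ denotes the coefficient of $\bx^\gamma$ and $(\cdot)(0)$ the constant term. I will derive this directly from the formula $\bx^\beta \bullet \bx^\alpha = \tfrac{\alpha!}{(\alpha-\beta)!}\bx^{\alpha-\beta}$ (vanishing when $\beta \not\leqslant \alpha$): writing $f=\sum f_\alpha \bx^\alpha$ and $P=\sum a_\beta \bx^\beta$, both sides collapse to $\tfrac{1}{\gamma!}\sum_\alpha f_\alpha\, a_{\alpha+\gamma}\,(\alpha+\gamma)!$ after routine bookkeeping.

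Granting the identity, both inclusions follow at once. For $\mathfrak{I}(V) \subseteq \{f \mid \forall P \in V,\ P\bullet f \in \mathfrak{m}\}$, if $f \bullet P = 0$ then setting $\gamma = 0$ shows $(P\bullet f)(0) = 0$, i.e.\ $P \bullet f \in \mathfrak{m}$. For the reverse inclusion I invoke the hypothesis that $V$ is stable under every partial derivative $\partial/\partial x_i$: iterating, $\bx^\gamma \bullet P = \partial_1^{\gamma_1}\cdots\partial_n^{\gamma_n} P$ lies in $V$ for every $\gamma$, so if $P' \bullet f \in \mathfrak{m}$ for all $P' \in V$ then in particular $(\bx^\gamma \bullet P)\bullet f \in \mathfrak{m}$ for every $\gamma$; the identity then forces $[\bx^\gamma](f \bullet P) = 0$ for every $\gamma$, hence $f \bullet P = 0$.

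There is no real obstacle here; the only conceptual point to highlight is that the hypothesis of differentiation-closure of $V$ is precisely what is needed to make the argument self-closing, playing the same role as the multiplicative closure of $I$ did in Proposition~\ref{pro:dualdefinition}. In particular, the stability of $V$ under all first-order partials (and not merely some larger-order condition) is exactly what is required by the $\gamma$-th instance of the identity.
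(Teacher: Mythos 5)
Your proof is correct and follows essentially the same route as the paper's: both hinge on the coefficient identity $\gamma!\cdot[\bx^\gamma](f\bullet P) = ((\bx^\gamma\bullet P)\bullet f)(0)$ and then use the differentiation-stability of $V$ to keep $\bx^\gamma\bullet P$ inside $V$. The paper organizes this as a direct coefficient comparison rather than isolating the identity as a named lemma, but the computation and the logic are the same.
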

\begin{proof}
Let $P := \sum_{u \in \NN^n} a_u \bx^u \in V$, and let $f = f_{u_0} \bx^{u_0} + \cdots + f_{u_k} \bx^{u_k} \in k[\bx]$. Then 
\begin{equation}\label{eq:f bullet P}
    f \bullet P = \sum_{u_0 \leq u} a_u f_{u_0}\tfrac{u!}{(u -u_0)!}\bx^{u - u_0 } + \cdots + \sum_{u_k \leq u} a_u f_{u_k}\tfrac{u!}{(u - u_k)!}\bx^{u - u_k }.  
\end{equation}
Note that, the coefficient of $\tfrac{1}{\beta!} \bx^\beta$ in \eqref{eq:f bullet P} is 
$$a_{ u_0 + \beta} f_{u_0} (u_0 + \beta )! + \cdots + a_{ u_k + \beta} f_{u_k} (u_k + \beta )! .$$
Moreover $\bx^\beta \bullet P = \sum_{\beta \leq u} a_u \tfrac{u!}{(u - \beta)!} \bx^{u - \beta} \in V$, and 
\[
    ((\bx^\beta \bullet P) \bullet f) (0) = a_{u_0 + \beta }f_{u_0} (u_0 + \beta) ! + \cdots + a_{ u_k + \beta} f_{u_k} (u_k + \beta )!.
\]
Hence $f \bullet P = 0$ if and only if $ ((\bx^\beta \bullet P) \bullet f) (0) = 0$ for all $\beta$. 
Therefore $f \bullet P = 0$ for all $P \in V$ if and only if, $ (P \bullet f) (0) = 0$ for all $P \in V$.
\end{proof}

\begin{lemma}\label{lemma:Elim}
     Let $J \subset k[\bx]$ be a $\mathfrak{m}$-primary ideal. Then for every $m < n$ we have:
     \[
       (J \cap k[x_1,\ldots, x_m])^\perp = \left\{ P|_{\{ x_s = 0, \, \forall \, m < s \leq n \}}\, : \, P \in J^\perp\right\}.
    \]
\end{lemma}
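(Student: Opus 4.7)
The plan is to prove the two inclusions separately, using Macaulay-style duality for the harder direction. Write $\pi\colon k[\bx] \to R_m := k[x_1, \ldots, x_m]$ for the substitution $x_s \mapsto 0$ ($s > m$), and set $I := J \cap R_m$.

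For the inclusion $\supseteq$ I would argue directly. Given $P \in J^\perp$ put $Q := \pi(P)$, and note that if $f \in R_m$ then $f \bullet$ uses only $\partial/\partial x_1, \ldots, \partial/\partial x_m$, and these preserve the $k$-vector-space decomposition $k[\bx] = R_m \oplus (x_{m+1}, \ldots, x_n)k[\bx]$. Writing $P = Q + P'$ with $P' \in (x_{m+1}, \ldots, x_n)k[\bx]$, this gives $\pi(f \bullet P) = f \bullet Q$. For $f \in I$ we have $f \bullet P \in \mathfrak{m}$, so $f \bullet Q = \pi(f \bullet P) \in (x_1, \ldots, x_m)$, confirming $Q \in I^\perp$.

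For the inclusion $\subseteq$ I would pass to dual spaces. Since $J$ is $\mathfrak{m}$-primary, $\mathfrak{m}^N \subset J$ for some $N$, whence $(x_1, \ldots, x_m)^N \subset I$; in particular $I$ is primary with respect to the maximal ideal of $R_m$, and both $k[\bx]/J$ and $R_m/I$ are finite-dimensional over $k$. The symmetric non-degenerate pairing $(f, P) \mapsto (f \bullet P)(0)$ identifies $J^\perp$ with $(k[\bx]/J)^{\ast}$ (the inverse assignment $\phi \mapsto \sum_\alpha \phi(\bx^\alpha)\bx^\alpha/\alpha!$ is a finite sum because $\phi$ vanishes on $\mathfrak{m}^N$) and analogously identifies $I^\perp$ with $(R_m/I)^{\ast}$. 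The inclusion $R_m \hookrightarrow k[\bx]$ carries $I$ into $J$ and induces an injection $R_m/I \hookrightarrow k[\bx]/J$ (injectivity from $J \cap R_m = I$); its $k$-dual is a surjection $J^\perp \twoheadrightarrow I^\perp$. A direct pairing computation — namely $(f \bullet P)(0) = (f \bullet \pi(P))(0)$ for $f \in R_m$, again from the decomposition above — shows this dual map is exactly $P \mapsto \pi(P)$, which yields the desired inclusion.

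The main obstacle, I expect, is the cleanliness of the identification of inverse systems with duals of quotients and checking that dualizing the natural injection corresponds to the concrete substitution map; the $\mathfrak{m}$-primary hypothesis enters precisely here. A hands-on alternative would try, given $Q \in I^\perp$, to construct a correction term $P' \in (x_{m+1}, \ldots, x_n)k[\bx]$ such that $Q + P' \in J^\perp$; this reduces to solving the linear system $\langle f, P' \rangle = -\langle \pi(f), Q \rangle$ for $f \in J$, which appears to repackage the same duality input in a less transparent form.
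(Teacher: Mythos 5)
The paper does not actually prove this lemma; it cites \cite[Proposition~3.2]{Marinari-Moller-Mora} and moves on. Your argument is therefore a genuine self-contained replacement, and the core idea --- identifying $J^\perp$ with $(k[\bx]/J)^\ast$ via the perfect pairing $(f,P) \mapsto (f\bullet P)(0)$, then dualizing the injection $R_m/I \hookrightarrow k[\bx]/J$ to get a surjection $J^\perp \twoheadrightarrow I^\perp$, and finally checking that this surjection is precisely $P\mapsto \pi(P)$ --- is correct and is the standard way to establish this fact. The verification that the inverse assignment $\phi \mapsto \sum_\alpha \phi(\bx^\alpha)\bx^\alpha/\alpha!$ lands in $J^\perp$ (not just satisfying $(g\bullet P)(0)=0$ for $g\in J$, but $g\bullet P=0$) implicitly uses that $J$ is an ideal, so that $\bx^\beta g\in J$ for all $\beta$; it would be worth spelling this one line out, since it is where the ideal hypothesis, not just the subspace structure, enters.

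One small but real slip in the easy direction: you write that for $f\in I$ one has $f\bullet P\in\mathfrak{m}$, and conclude $f\bullet Q\in(x_1,\ldots,x_m)$. Neither of those is the condition defining the inverse system. By the paper's Proposition~\ref{pro:dualdefinition}, $P\in J^\perp$ means $f\bullet P=0$ exactly (not merely in $\mathfrak{m}$) for all $f\in J$; conversely, $Q\in I^\perp$ requires $f\bullet Q=0$, and $f\bullet Q\in(x_1,\ldots,x_m)$ is not sufficient. The fix is trivial --- use the actual equality $f\bullet P=0$, apply $\pi$, and conclude $f\bullet Q=0$ --- but as written the easy inclusion does not quite close. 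The decomposition argument (that $f\bullet$ for $f\in R_m$ preserves $k[\bx]=R_m\oplus(x_{m+1},\ldots,x_n)k[\bx]$, so $\pi(f\bullet P)=f\bullet\pi(P)$) is exactly right and is also the key computation that identifies the dual map with $\pi$ in the hard direction, so the two halves of your proof share one clean observation.
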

\begin{proof}
See \cite[Proposition 3.2]{Marinari-Moller-Mora}
\end{proof}

\begin{lemma}\label{lemma:Elim2}
     Let $J \subset k[\{x_i\, |\, i \in \NN\}]$ an $\mathfrak{m}$-primary homogeneous ideal where $\mathfrak{m}:= \langle \{x_i\, | \, i \in \NN \} \rangle$. Then for every $\ell \in \NN$ we have:
     \[
        (J \cap k[x_0, \ldots, x_\ell])^\perp = \left\{ P|_{\{ x_s = 0, \, \forall \, s>\ell  \}}\, : \, P \in J^\perp\right\}.
     \]
\end{lemma}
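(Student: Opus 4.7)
The plan is to adapt the proof of Lemma~\ref{lemma:Elim} to the infinite-variable setting, exploiting the graded structure of $J$ to reduce to the finite-variable case via projections. The easy inclusion is immediate, while the reverse inclusion requires a limiting argument over truncations.

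For the inclusion $\supseteq$, I would take $P\in J^\perp$ and let $Q:=P|_{x_s=0,\,s>\ell}$. Any $f\in J\cap k[x_0,\ldots,x_\ell]$ differentiates only in $x_0,\ldots,x_\ell$, so $f\bullet(\cdot)$ commutes with the substitution $x_s=0$ for $s>\ell$; thus $f\bullet Q=(f\bullet P)|_{x_s=0,\,s>\ell}$. Since Proposition~\ref{pro:dualdefinition} extends verbatim to the infinite-variable setting (every polynomial has finite support), $P\in J^\perp$ gives $f\bullet P=0$, hence $f\bullet Q=0$ and $Q\in(J\cap k[x_0,\ldots,x_\ell])^\perp$.

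For the reverse inclusion, since $J$ is homogeneous both sides are graded subspaces of $k[x_0,\ldots,x_\ell]$, and it suffices to prove equality in each fixed degree $d$. For every $N\geq\ell$, set $R_N:=k[x_0,\ldots,x_N]$ and let $\pi_N\colon k[\{x_i\mid i\in\NN\}]\to R_N$ denote the substitution $x_s\mapsto 0$ for $s>N$. The ideal $\pi_N(J)\subset R_N$ is homogeneous and $\mathfrak{m}_N$-primary (with $\mathfrak{m}_N:=\langle x_0,\ldots,x_N\rangle$), as $\pi_N(\sqrt J)=\mathfrak{m}_N$ and $\mathfrak{m}_N$ is maximal in $R_N$. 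Moreover, one checks $J^\perp\cap R_N=\pi_N(J)^\perp$, since any monomial of $f\in J$ containing some $x_s$ with $s>N$ annihilates every $P\in R_N$. Applying Lemma~\ref{lemma:Elim} to $\pi_N(J)$ with the elimination from $R_N$ to $R_\ell$ yields
\[
  (\pi_N(J)\cap k[x_0,\ldots,x_\ell])^\perp=\pi_\ell(\pi_N(J)^\perp)=\pi_\ell(J^\perp\cap R_N).
\]
As every $P\in J^\perp$ has finite support, hence lies in some $R_N$, the union over $N$ of the right-hand side exhausts $\pi_\ell(J^\perp)$.

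The main obstacle is to identify the corresponding limit on the left-hand side with $(J\cap k[x_0,\ldots,x_\ell])^\perp$. The sequence $\pi_N(J)_d\cap k[x_0,\ldots,x_\ell]_d$ is decreasing in $N$ inside the finite-dimensional space $k[x_0,\ldots,x_\ell]_d$, so it stabilizes at some subspace $W_d\supseteq J_d\cap k[x_0,\ldots,x_\ell]_d$. The crucial step is showing $W_d=J_d\cap k[x_0,\ldots,x_\ell]_d$: if $f\in W_d$ admits for each $N$ a witness $g_N\in J_d$ with $g_N-f\in\langle x_s\colon s>N\rangle$, the $\mathfrak{m}$-primary and homogeneous hypotheses should force the tails $g_N-f$ to vanish for $N$ sufficiently large, yielding $f\in J_d$. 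Once this stabilization is established, finite-dimensional Macaulay duality in $k[x_0,\ldots,x_\ell]_d$ gives $\bigcup_N(\pi_N(J)_d\cap k[x_0,\ldots,x_\ell]_d)^\perp=(J\cap k[x_0,\ldots,x_\ell])^\perp$ in degree $d$, completing the argument.
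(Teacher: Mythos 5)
Your strategy is genuinely different from the paper's: you project $J$ down to the finite truncations $R_N$ and pass to the union, whereas the paper lifts $P$ upward through the chain $(J\cap k[x_0,\ldots,x_{\ell+k}])^\perp$ to build a formal power series $\tilde P$ and then truncates it in degree. Both routes rest on Lemma~\ref{lemma:Elim}, and your reductions are correct as far as they go: $J^\perp\cap R_N=\pi_N(J)^\perp$, Lemma~\ref{lemma:Elim} applied to $\pi_N(J)$, and $\bigcup_N\pi_\ell(J^\perp\cap R_N)=\pi_\ell(J^\perp)$ all check out. The problem is precisely the step you yourself flag: the stabilization
\[
\bigcap_N\bigl(\pi_N(J)_d\cap k[x_0,\ldots,x_\ell]_d\bigr)=J_d\cap k[x_0,\ldots,x_\ell]_d
\]
does \emph{not} follow from $J$ being $\mathfrak{m}$-primary and homogeneous, and in fact fails.

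Take $J=\langle x_0-x_N\,:\,N\geqslant 1\rangle+\langle x_ix_j\,:\,i,j\geqslant 0\rangle$ and $\ell=0$. This $J$ is homogeneous with $\mathfrak{m}^2\subseteq J\subseteq\mathfrak{m}$, hence $\mathfrak{m}$-primary. One has $J\cap k[x_0]=(x_0^2)$, so $(J\cap k[x_0])^\perp=k\oplus kx_0$; but $J^\perp=k$, since a degree-one polynomial $P=\sum a_ix_i\in J^\perp$ must satisfy $a_0=a_N$ for every $N\geqslant 1$, forcing $P=0$ because $P$ has finite support, and $\langle x_ix_j\rangle$ annihilates everything of degree $\geqslant 2$. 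So the right-hand side of the lemma is strictly smaller than the left. In your language, $x_0\in\pi_N(J)_1\cap k[x_0]_1$ for every $N$ (witness $g_N=x_0-x_{N+1}\in J_1$), yet $x_0\notin J_1$: the tails $g_N-x_0=-x_{N+1}$ never die. Note that the paper's own proof has the mirror-image difficulty in the same example: the lifted power series is $\tilde P=x_0+x_1+x_2+\cdots$, so the ``truncation'' $Q=\sum_{d\leqslant(m-1)(\ell+1)}\tilde P_d$ is still a power series, not a polynomial, and hence cannot be the element of $J^\perp$ the proof asks for. Both arguments therefore need a further finiteness input specific to $J$ (which does hold for $J=\mathcal{I}_n^{\arc}$, the only case used later), and your ``should force the tails to vanish'' is exactly where the general statement breaks.
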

\begin{proof}
 One inclusion follows immediately  from the fact that if $\forall f \in J, f \bullet P = 0$, then $\forall  f\in J \cap k[x_0, \ldots, x_\ell ], f \bullet P|_{\{ x_s = 0, \, \forall \, s> \ell   \}} =0 $ . Let us prove  the opposite inclusion:
 \[
        (J \cap k[x_0, \ldots, x_\ell])^\perp\subseteq \left\{ P|_{\{ x_s = 0, \, \forall \, s> \ell   \}}\, : \, P \in J^\perp\right\}.
 \]
 
 Let $P \in (J \cap k[x_0, \ldots, x_\ell])^\perp$, then by Lemma \ref{lemma:Elim} there exists  $P_1 \in (J \cap k[x_0, \ldots, x_{\ell + 1}])^\perp$ such that $P_1|_{x_{\ell +1 = 0}} = P $. By repeating this  one can construct a power series $\Tilde{P} \in k[\![\{x_i\, |\, i \in \NN\}]\!]$ such that $\Tilde{P}|_{\{ x^{(s)} = 0, \, \forall \, s > \ell \}} = P$, and
    \[
        f \bullet \Tilde{P} = 0 \text{ for all } f \in J.
    \]
    For every $d \in \NN$, let $\Tilde{P}_d$  be the homogeneous polynomial summand of $\Tilde{P}$ of degree $d$. Since $J$ is a homogeneous polynomial ideal with respect to the degree  we have:
    \[
   \forall \, d  \in \NN, \, \forall\, f \in J:\,    f \bullet \Tilde{P}_{d} = 0 .
    \]
    Hence, $\forall \, d \in \NN,\, \Tilde{P}_{d} \in J^\perp$. 
    
    Let $m\in \NN$ such that,  $\forall\,  0 \leq i \leq n : x_i^m \in J$, then $\deg_{x_i}(\Tilde{P}) < m$.
    Let us  introduce
    \[
        Q = \sum_{ d \leq (m - 1)(\ell + 1)} \Tilde{P}_d.
    \]
    We would like to prove that $Q|_{\{ x_s = 0, \, \forall \, m < s \}} = P$.  Since the monomial $x_0^{m -1} x_1^{m -1 } \cdots x_\ell^{m - 1} $ has degree $d_m := ( m - 1)(\ell +1)$, every monomial summand of $\Tilde{P}$ whose degree  bigger than $d_m$  is involving $x_s$ for some  $s > \ell$. Therefore $\Tilde{P}|_{\{ x_s= 0, \, \forall \, s> \ell \}} = Q|_{\{ x_s = 0, \, \forall \, s> \ell \}} = P$. Hence, 
    \[
    P \in \left\{ P|_{\{ x_s = 0, \, \forall \, s>\ell \}}\, : \, P \in J^\perp\right\}.
    \]
\end{proof}

\begin{theorem}\label{thm : Elim}
For every non negative integer $h$, we have
\[
    (\mathcal{I}_n^{\arc}\cap k[\bx^{\leqslant h}])^\perp = \left\{ P|_{\{ x^{(s)} = 0, \, \forall \, s > h \}}\, : \, P \in ( \mathcal{I}_n^{\arc})^\perp\right\}.
\]
\end{theorem}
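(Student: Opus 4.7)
The plan is to derive Theorem~\ref{thm : Elim} as a direct consequence of Lemma~\ref{lemma:Elim2}, after relabeling variables and verifying the hypotheses. First I would choose a bijection between $\bx^{(\infty)}$ and the countable set $\{x_i : i \in \NN\}$ so that the block $\bx^{(\leqslant h)}$ corresponds to the initial segment $\{x_0, \ldots, x_{n(h+1)-1}\}$ (for instance, by ordering the variables $x_i^{(j)}$ lexicographically by $(j, i)$). Under this relabeling, the statement of Theorem~\ref{thm : Elim} becomes exactly the conclusion of Lemma~\ref{lemma:Elim2} with $\ell = n(h+1) - 1$.

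Next I would verify that $\mathcal{I}_n^{\arc}$ satisfies the two hypotheses needed. Homogeneity is immediate: the generators of $\mathcal{I}_n^{\arc}$ are the coefficients in $t$ of $x_i(t) x_j(t)$, each of which is homogeneous of degree $2$ in the variables $x_i^{(k)}$. For the $\mathfrak{m}$-primary condition, what the proof of Lemma~\ref{lemma:Elim2} really needs is that each variable $x_i^{(j)}$ occurring in $\bx^{(\leqslant h)}$ admits some power $(x_i^{(j)})^m \in \mathcal{I}_n^{\arc}$ (so that any $\widetilde P$ annihilating $\mathcal{I}_n^{\arc}$ under $\bullet$ must have bounded degree in that variable). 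Since we only need this for finitely many variables at a time, we may take $m$ to be the maximum of individual powers.

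The existence of such powers is a classical fact from differential algebra: by the Ritt--Raudenbush theorem, the perfect differential ideal $\{x_i^2\}$ coincides with the prime differential ideal $[x_i]$, so every $x_i^{(j)}$ lies in the radical of $\mathcal{I}_n^{\arc}$ and hence some power belongs to $\mathcal{I}_n^{\arc}$. Alternatively, one can argue concretely by induction on $j$, starting from $x_i^2 \in \mathcal{I}_n^{\arc}$ and using that differentiating the generator $x_i(t)^2$ yields polynomials whose top-order term is a scalar multiple of $(x_i^{(j)})^2$ modulo products involving strictly lower-order derivatives, then eliminating those cross terms by multiplying against existing members of the ideal. Once this power-containment property is established, the theorem follows instantly from Lemma~\ref{lemma:Elim2}; the only technical point (and the closest thing to an obstacle) is this verification of the $\mathfrak{m}$-primary hypothesis.
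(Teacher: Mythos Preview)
Your proposal is correct and follows essentially the same route as the paper: both reduce the theorem to Lemma~\ref{lemma:Elim2} after checking that $\mathcal{I}_n^{\arc}$ is homogeneous and $\mathfrak{m}$-primary. The paper's proof is a single sentence that takes these hypotheses for granted, whereas you spell out the relabeling and the verification (in particular the power-containment $(x_i^{(j)})^m \in \mathcal{I}_n^{\arc}$) explicitly; this is more careful but not a different argument.
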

\begin{proof}
Since $(\mathcal{I}_n^{\arc}\cap k[\bx^{\leqslant h}])$ is homogeneous with respect to the degree, this follows immediately  from Lemma \ref{lemma:Elim2}
\end{proof}

We can make this statement more explicit using Corollary~\ref{cor:inf_by_inf}.

\begin{notation}
    For a differential variable $x$ and positive integer $h$, we denote by $\mathbf{T}_h(x)$ the upper-triangular matrix of order $h + 1$ with $x^{(h - i)}$ on the $i$-th diagonal above the main one.
    That is:
    \[
    \mathbf{T}_h(x) = \begin{pmatrix}
        x^{(h)} & x^{(h - 1)} & x^{(h - 2)} & \ldots & x \\
        0 & x^{(h)} & x^{(h - 1)} & \ldots & x' \\
        \vdots & \vdots & x^{(h)} & \ddots & \vdots \\
        \vdots & \vdots & \vdots & \ddots & x^{(h - 1)}\\
        0 & 0 & 0 & \ldots & x^{(h)}
    \end{pmatrix}.
    \]
    We define $\mathbf{T}_{n, h}$ to be the $(h + 1) \times n(h + 1)$ matrix obtained by concatenation of $\mathbf{T}_{h}(x_1), \ldots, \mathbf{T}_{h}(x_n)$.
\end{notation}

\begin{corollary}\label{cor:main_at0}
The vector space    $(\mathcal{I}_n^{\arc}\cap k[\bx^{\leqslant h}])^\perp$ is equal to the vector space generated by the minors of $\mathbf{T}_{n, h}$.
\end{corollary}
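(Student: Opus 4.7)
The plan is to combine Theorem~\ref{thm : Elim} with Corollary~\ref{cor:inf_by_inf} and then to compare, entry by entry, the minors of $\mathbf{H}_{n,\infty}$ after the substitution $x_s^{(m)} \mapsto 0$ (for all $s$ and $m > h$) with the minors of $\mathbf{T}_{n,h}$. By Corollary~\ref{cor:inf_by_inf} the space $(\mathcal{I}_n^{\arc})^\perp$ is spanned by the minors of the infinite Hankel matrix, and by Theorem~\ref{thm : Elim} the space $(\mathcal{I}_n^{\arc}\cap k[\bx^{\leqslant h}])^\perp$ is exactly the image of $(\mathcal{I}_n^{\arc})^\perp$ under this substitution. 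Hence it is enough to show that the images of the Hankel minors and the minors of $\mathbf{T}_{n,h}$ span the same vector subspace of $k[\bx^{\leqslant h}]$.

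The core step is an entrywise match. Consider an arbitrary $\ell \times \ell$ minor of $\mathbf{H}_{n,\infty}$, specified by rows $a_1 < \cdots < a_\ell$ (with $a_i \geqslant 0$) and columns indexed by pairs $(s_j, b_j)$, where $s_j \in \{1, \ldots, n\}$ is the variable and $b_j \geqslant 0$ is the shift; the $(i,j)$-entry is $x_{s_j}^{(a_i + b_j)}$. After substitution this entry vanishes precisely when $a_i + b_j > h$. If some $a_i > h$, then the $i$-th row is identically zero because $a_i + b_j > h$ for every $b_j \geqslant 0$; similarly, if some $b_j > h$, the $j$-th column is identically zero. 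So only minors with $a_i, b_j \in \{0, \ldots, h\}$ can contribute. For those, set $c_j := h - b_j \in \{0, \ldots, h\}$. A direct check then shows that the restricted $(i,j)$-entry equals the entry of $\mathbf{T}_{n,h}$ at row $a_i$ and column $(s_j, c_j)$: both are $x_{s_j}^{(h - c_j + a_i)}$ when $c_j \geqslant a_i$ and vanish otherwise.

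Running this identification in both directions establishes that every nonzero restriction of a minor of $\mathbf{H}_{n,\infty}$ equals, up to sign, a minor of $\mathbf{T}_{n,h}$, and conversely that every minor of $\mathbf{T}_{n,h}$ is realized as the restriction of the Hankel minor using the same rows $a_1, \ldots, a_\ell$ and the columns with shifts $b_j = h - c_j$ on variables $s_j$. Combined with the preliminary reduction via Theorem~\ref{thm : Elim} and Corollary~\ref{cor:inf_by_inf}, this yields the desired equality of vector spaces. The only subtlety is bookkeeping the two parametrizations of the columns, namely the Hankel shift $b$ versus the $\mathbf{T}_{n,h}$ column index $c = h - b$; once this dictionary is set up, everything else is a mechanical verification.
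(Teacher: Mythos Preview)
Your proof is correct and follows the route the paper intends: the corollary is stated in the paper without an explicit proof, only the remark that Theorem~\ref{thm : Elim} is made ``more explicit using Corollary~\ref{cor:inf_by_inf}.'' Your entry-by-entry identification via the dictionary $c_j = h - b_j$ is exactly the computation the paper leaves to the reader, and your handling of the vanishing rows and columns when $a_i > h$ or $b_j > h$ cleanly disposes of the out-of-range minors.
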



\section{Poincar\'e-type series for $\mathcal{I}_n^{\arc}$}\label{sec:poincare}

The goal of this section is to use the obtained characterization of $(\mathcal{I}_n^{\arc}\cap k[x^{\leqslant h}])^\perp$ in order to prove the following dimension result (cf.~\cite[Theorem~3.1]{GlebRida}).

\begin{theorem}\label{thm:poincare}
    For every $h \in \mathbb{Z}_{\geqslant 0}$ we have
    $\dim k[\mathbf{x}^{(\leqslant h)}] / (\mathcal{I}_n^{\arc} \cap k[\mathbf{x}^{(\leqslant h)}]) = (n + 1)^{h + 1}$.
\end{theorem}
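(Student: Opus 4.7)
The plan is to prove $\dim k[\mathbf{x}^{(\leqslant h)}]/(\mathcal{I}_n^{\arc} \cap k[\mathbf{x}^{(\leqslant h)}]) = (n+1)^{h+1}$ by matching upper and lower bounds. By Macaulay duality, applicable since $\mathcal{I}_n^{\arc} \cap k[\mathbf{x}^{(\leqslant h)}]$ is $\mathfrak{m}$-primary in the finite-dimensional ring $k[\mathbf{x}^{(\leqslant h)}]$, the quotient dimension equals $\dim (\mathcal{I}_n^{\arc} \cap k[\mathbf{x}^{(\leqslant h)}])^\perp$. Corollary~\ref{cor:main_at0} then identifies this inverse system with the span of minors of the matrix $\mathbf{T}_{n,h}$, reducing the problem to computing this span's dimension.

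For the upper bound, I would invoke the Schmidt-Kolchin conjecture recently proved by Etesse~\cite{etesse2023schmidtkolchin}: since $\dim_k k[\mathbf{x}]/\mathcal{I}_n = n + 1$ is the multiplicity of the double point at the origin, this result immediately yields $\dim k[\mathbf{x}^{(\leqslant h)}]/(\mathcal{I}_n^{\arc} \cap k[\mathbf{x}^{(\leqslant h)}]) \leqslant (n+1)^{h+1}$.

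The matching lower bound requires exhibiting $(n+1)^{h+1}$ linearly independent minors of $\mathbf{T}_{n,h}$. I would parametrize the candidates by sequences $w = (w_0, \ldots, w_h) \in \{0, 1, \ldots, n\}^{h+1}$. For each $w$, with $I_w := \{i : w_i \neq 0\}$, I would form a $|I_w| \times |I_w|$ minor $M_w$ on rows $I_w$ and columns selected (one from the block $\mathbf{T}_h(x_{w_i})$ for each $i \in I_w$) via a rule that encodes the full combinatorial data of $w$. Linear independence would then follow from a leading-monomial argument under a term order favoring higher-order derivatives, so that each $M_w$ has a uniquely identifiable leading term.

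The main obstacle is engineering the column-selection rule so that the $M_w$'s have pairwise distinct leading monomials. Small examples already show that naive diagonal-style choices produce proportional minors for different $w$: for instance, when $n = 2$ and $h = 1$, both $(1, 2)$ and $(2, 1)$ can reduce to $\pm x_1' x_2'$ under a naive rule, whereas one of the pair should instead recover the Wronskian $x_1 x_2' - x_1' x_2$. Devising a uniform rule that threads this needle across all sequences, and then verifying distinctness of the leading monomials, is the technical heart of the argument.
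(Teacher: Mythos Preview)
Your overall scaffolding (Macaulay duality plus Corollary~\ref{cor:main_at0}) matches the paper's, but both the upper and lower halves of your argument have real problems.

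\textbf{Upper bound.} The Schmidt--Kolchin conjecture proved by Etesse does not say what you are attributing to it. That result computes the dimension of the space of \emph{differentially homogeneous} polynomials of degree $d$ in $m$ variables as $m^d$; it is not a statement bounding $\dim k[\mathbf{x}^{(\leqslant h)}]/(I^{\arc}\cap k[\mathbf{x}^{(\leqslant h)}])$ in terms of the multiplicity $\dim k[\mathbf{x}]/I$ for a general $\mathfrak{m}$-primary $I$. So the inequality $\leqslant (n+1)^{h+1}$ does not follow ``immediately'' from Etesse's theorem as stated. The paper does use Etesse, but only after a chain of identifications: the span of minors of $\mathbf{T}_{n,h}$ is matched (via the involution $x^{(i)}\mapsto x^{(h-i)}/(h-i)!$) with the span of minors of a companion matrix $\mathbf{S}_{n,h}$, which in turn coincides with the span of \emph{maximal} minors of $\mathbf{S}_{n+1,h}|_{x_{n+1}=1}$ (using an extra variable set to $1$ to absorb the non-maximal minors). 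It is this last space that Etesse identifies with the differentially homogeneous polynomials of degree $h+1$ in $n+1$ variables, giving the exact count $(n+1)^{h+1}$ --- not merely an upper bound.

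\textbf{Lower bound.} You correctly identify that a direct construction of $(n+1)^{h+1}$ independent minors is delicate, and your own example shows the naive rule fails; but you stop short of supplying any rule that works. Without it, this half of the argument is not a proof. The paper sidesteps this entirely: once the span of minors is identified with a space whose dimension Etesse computed exactly, no separate lower bound is needed. If you want to salvage the combinatorial route, you would need a genuinely new bijection between $\{0,\ldots,n\}^{h+1}$ and a distinguished set of minors with distinct leading terms --- an interesting exercise, but not the path of least resistance given the tools already in the paper.
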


\begin{notation}
    For a differential variable $x$ and positive integer $h$, we denote by $\mathbf{S}_h(x)$ the upper-triangular matrix of order $h + 1$ with $x^{(i + 1)} / i!$ on the $i$-th diagonal above the main one.
    That is:
    \[
    \mathbf{S}_h(x) = \begin{pmatrix}
        x & x' & \frac{x''}{2} & \ldots & \frac{x^{(h)}}{ h!} \\
        0 & x & x' & \ldots & \frac{x^{(h - 1)} }{ (h - 1)!} \\
        \vdots & \vdots & \ddots &  \ddots & \vdots \\
        \vdots & \vdots & \vdots & \ddots & x'\\
        0 & 0 & 0 & \ldots & x
    \end{pmatrix}.
    \]
    We define $\mathbf{S}_{n, h}$ to be the $(h + 1) \times n(h + 1)$ matrix obtained by concatenation of the matrices $\mathbf{S}_{h}(x_1), \ldots, \mathbf{S}_{h}(x_n)$.
\end{notation}

\begin{lemma}\label{lem:T_to_S}
    For every $n, h$, the dimensions of the spaces of the minors of $\mathbf{T}_{n, h}$ and $\mathbf{S}_{n, h}$ coincide.
\end{lemma}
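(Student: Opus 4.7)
The plan is to exhibit an explicit $k$-algebra automorphism of $k[\bx^{(\leqslant h)}]$ that, applied entry-by-entry, turns $\mathbf{T}_{n,h}$ into $\mathbf{S}_{n,h}$. Since a minor of a matrix is a polynomial in its entries, and ring homomorphisms commute with determinants, such an automorphism would carry the spanning set of minors of $\mathbf{T}_{n,h}$ bijectively to that of $\mathbf{S}_{n,h}$, giving a $k$-linear isomorphism of the two vector spaces and hence equality of dimensions.

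Concretely, I would define
\[
\phi \colon k[\bx^{(\leqslant h)}] \to k[\bx^{(\leqslant h)}], \qquad \phi(x_i^{(j)}) := \frac{x_i^{(h-j)}}{(h-j)!}, \quad 1 \leqslant i \leqslant n,\ 0 \leqslant j \leqslant h,
\]
and extend it multiplicatively. Invertibility is immediate, as the assignment $x_i^{(j)} \mapsto (h-j)!\,x_i^{(h-j)}$ provides an explicit inverse (and the ground field has characteristic zero, so the factorials are units). The essential verification is then a one-line comparison of entries: for $a \leqslant b$ the $(a,b)$-entry of $\mathbf{T}_h(x_i)$ is $x_i^{(h-(b-a))}$, and $\phi$ sends it to $x_i^{(b-a)}/(b-a)!$, which is exactly the $(a,b)$-entry of $\mathbf{S}_h(x_i)$. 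Below the diagonal both matrices are zero.

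Since $\mathbf{T}_{n,h}$ and $\mathbf{S}_{n,h}$ are built from the single-variable blocks $\mathbf{T}_h(x_i)$ and $\mathbf{S}_h(x_i)$ in the same positional order, the entrywise image of $\mathbf{T}_{n,h}$ under $\phi$ is $\mathbf{S}_{n,h}$. Applying $\phi$ to a minor of $\mathbf{T}_{n,h}$ (a determinant polynomial in its entries) therefore yields the corresponding minor of $\mathbf{S}_{n,h}$, and the inverse automorphism reverses this correspondence. Consequently $\phi$ restricts to a $k$-linear isomorphism between the span of minors of $\mathbf{T}_{n,h}$ and the span of minors of $\mathbf{S}_{n,h}$, which proves the statement.

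The only nontrivial ingredient is guessing the right change of variables: one must simultaneously reverse the derivative index $j \mapsto h-j$ \emph{and} rescale by the factorial $1/(h-j)!$ -- neither operation alone suffices, since $\mathbf{T}_h(x_i)$ places the higher-order derivatives on the main diagonal with unit coefficients, while $\mathbf{S}_h(x_i)$ places them on the top-right corner with factorial denominators. Once this combined substitution is identified, the verification and the conclusion are routine.
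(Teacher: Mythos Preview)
Your proof is correct and uses precisely the same change of variables $x_i^{(j)} \mapsto x_i^{(h-j)}/(h-j)!$ that the paper employs; the paper's own proof is the one-line remark ``The bijective map between these spaces is defined by $x^{(i)} \to x^{(h - i)} / (h - i)!$,'' and you have simply spelled out the verification in more detail.
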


\begin{proof}
    The bijective map between these spaces is defined by $x^{(i)} \to x^{(h - i)} / (h - i)!$.
\end{proof}

\begin{lemma}\label{lem:S_to_S}
    For every $n, h$, the space of the minors of $\mathbf{S}_{n, h}$ coincides with the space of maximal minors of $\mathbf{S}_{n + 1, h}|_{x_{n + 1} = 1}$.
\end{lemma}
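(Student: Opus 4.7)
The plan is to exploit the fact that setting $x_{n+1} = 1$ collapses the last block $\mathbf{S}_h(x_{n+1})$ into the identity matrix, so that $\mathbf{S}_{n+1, h}|_{x_{n+1} = 1}$ is literally the block matrix $[\,\mathbf{S}_{n, h} \mid I_{h+1}\,]$. Indeed, setting $x_{n+1} = 1$ means $x_{n+1}^{(j)} = 0$ for all $j \geqslant 1$, and the defining formula for $\mathbf{S}_h(x)$ shows that only the main diagonal survives, so that block becomes $I_{h+1}$.

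With this in hand, the proof reduces to a Laplace-expansion argument. First I would show the inclusion of the minors of $\mathbf{S}_{n, h}$ into the maximal minors of $\mathbf{S}_{n+1, h}|_{x_{n+1} = 1}$: given a $k \times k$ minor of $\mathbf{S}_{n, h}$ corresponding to row set $R \subset \{1, \ldots, h+1\}$ and a choice of $k$ columns from $\mathbf{S}_{n, h}$, complete these $k$ columns with the $h+1-k$ identity columns $e_j$ for $j \notin R$. Expanding the resulting $(h+1) \times (h+1)$ determinant along the identity columns recovers the original $k \times k$ minor of $\mathbf{S}_{n, h}$ up to a sign coming from the permutation that brings the identity columns into standard position. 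Conversely, every maximal minor of $\mathbf{S}_{n+1, h}|_{x_{n+1} = 1}$ picks $k$ columns from $\mathbf{S}_{n, h}$ and $h+1-k$ columns from $I_{h+1}$ for some $k$, and the same Laplace expansion expresses it as a signed $k \times k$ minor of $\mathbf{S}_{n, h}$.

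There is no real obstacle here; the only thing one needs to watch is the sign bookkeeping in the Laplace expansion, which is routine. The two inclusions together give the equality of vector spaces, since the signs are nonzero scalars.
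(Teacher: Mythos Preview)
Your proposal is correct and essentially identical to the paper's own proof: both observe that $\mathbf{S}_h(x_{n+1})|_{x_{n+1}=1}$ is the identity, then set up the bijection between minors of $\mathbf{S}_{n,h}$ and maximal minors of $[\,\mathbf{S}_{n,h}\mid I_{h+1}\,]$ by adding or removing the identity columns indexed by the unused rows. The paper states this a bit more tersely and leaves the Laplace expansion and signs implicit, whereas you spell them out, but there is no substantive difference.
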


\begin{proof}
    We observe that $\mathbf{S}_h(x_{n + 1})|_{x_{n + 1} = 1}$ is simply the $(h + 1)$-dimensional identity matrix.
    Therefore, every minor of $\mathbf{S}_{n, h}$ can be  completed to a maximal minor of $\mathbf{S}_{n + 1, h}|_{x_{n + 1} = 1}$ by taking the columns with the ones on the missing rows.
    In the other direction, every maximal minor of $\mathbf{S}_{n + 1, h}|_{x_{n + 1} = 1}$ can be reduced to a minor of $\mathbf{S}_{n, h}$ by discarding the columns in the $\mathbf{S}_h(x_{n + 1})$-part and the rows in which these columns contain ones.
\end{proof}

\begin{lemma}\label{lem:diff_homog}
    The dimension of the space of maximal minors of $\mathbf{S}_{n + 1, h}|_{x_{n + 1} = 1}$ is equal to $(n + 1)^{h + 1}$.
\end{lemma}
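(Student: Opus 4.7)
The plan is to obtain the stated dimension as a consequence of the Schmidt-Kolchin theorem, passing through Macaulay duality and the matrix identifications built up earlier in the section. Concretely, the plan has three steps.

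First, I would apply Macaulay duality for $\mathfrak{m}$-primary ideals (Proposition~\ref{pro:dualdefinition}) to the ideal $J = \mathcal{I}_n^{\arc} \cap k[\mathbf{x}^{(\leqslant h)}]$ in the finite-dimensional polynomial ring $k[\mathbf{x}^{(\leqslant h)}]$; this is the standard duality identifying an $\mathfrak{m}$-primary ideal with its inverse system and yields
\[
\dim_k k[\mathbf{x}^{(\leqslant h)}] / J \;=\; \dim_k J^\perp.
\]

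Second, I would chain through the identifications established so far. Corollary~\ref{cor:main_at0} identifies $J^\perp$ with the vector space spanned by the minors of $\mathbf{T}_{n,h}$; Lemma~\ref{lem:T_to_S} furnishes a dimension-preserving correspondence between this space and the space spanned by the minors of $\mathbf{S}_{n,h}$; and Lemma~\ref{lem:S_to_S} identifies the latter with the space of maximal minors of $\mathbf{S}_{n+1,h}|_{x_{n+1}=1}$. Combining, one obtains
\[
\dim\bigl(\text{maximal minors of } \mathbf{S}_{n+1,h}|_{x_{n+1}=1}\bigr) \;=\; \dim k[\mathbf{x}^{(\leqslant h)}] / (\mathcal{I}_n^{\arc} \cap k[\mathbf{x}^{(\leqslant h)}]).
\]

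Third, I would invoke the Schmidt-Kolchin conjecture, recently proved by Etesse~\cite{etesse2023schmidtkolchin}, which states precisely that this quotient has dimension $(n+1)^{h+1}$. Substituting into the chain above gives exactly the claim of the lemma.

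The hard part is, honestly, not located in this particular statement: all the substantial work sits upstream, in the Wronskian characterization (Theorem~\ref{thm:main}), the elimination lemma (Theorem~\ref{thm : Elim}) that yields Corollary~\ref{cor:main_at0}, and the Schmidt-Kolchin theorem itself. The present lemma is what lets the arithmetic-combinatorial identity $(n+1)^{h+1}$ from Etesse's theorem be recognized on the linear-algebraic side of Macaulay duality, and its proof is a matter of correctly threading together these already-established identifications.
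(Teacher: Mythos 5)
Your proposal is circular and misattributes the content of the Schmidt--Kolchin conjecture. The conjecture, as stated and proved by Etesse, concerns the dimension of the space of \emph{differentially homogeneous} polynomials of a given degree in $n+1$ differential variables (namely that it equals $(n+1)^{d}$); it does not assert anything about the quotient $k[\mathbf{x}^{(\leqslant h)}] / (\mathcal{I}_n^{\arc} \cap k[\mathbf{x}^{(\leqslant h)}])$. The claim that this quotient has dimension $(n+1)^{h+1}$ is precisely Theorem~\ref{thm:poincare} of this paper, which is the target the present lemma is built to reach: the paper's proof of Theorem~\ref{thm:poincare} explicitly concatenates Lemmas~\ref{lem:T_to_S}, \ref{lem:S_to_S}, and~\ref{lem:diff_homog} (together with Corollary~\ref{cor:main_at0} and Macaulay duality) to obtain it. So your chain of identifications in step two is fine, but step three assumes the conclusion you are trying to derive.

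The substance of this lemma, which your proposal skips, is exactly the bridge from the Schmidt--Kolchin theorem to the matrix $\mathbf{S}_{n+1,h}|_{x_{n+1}=1}$. The paper proceeds by first invoking Etesse's characterization that the span of the maximal minors of $\mathbf{S}_{n+1,h}$ is \emph{exactly} the space of differentially homogeneous polynomials of degree $h+1$, so that the Schmidt--Kolchin dimension count $(n+1)^{h+1}$ applies to that span. It then needs a second, independent fact to pass to the specialized matrix: the map $P \mapsto P|_{x_{n+1}=1}$ is \emph{injective} on differentially homogeneous polynomials of a fixed degree (Reinhart), which is what guarantees the specialization does not drop dimension. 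Both ingredients are non-obvious and are the genuine content of the lemma; without them your argument has no way to land on $(n+1)^{h+1}$ except by presupposing the final theorem.
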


\begin{definition}
    A differential polynomial $P\in k[\mathbf{x}^{(\infty)}]$ of degree $d$ is called \emph{differentially homogeneous} if, for a differential indeterminate $y$, the equality $P(y\cdot \mathbf{x}) = y^d P(\mathbf{x})$ holds.
\end{definition}

\begin{proof}[Proof of Lemma~\ref{lem:diff_homog}]
    By~\cite[Theorem~2.5.3 and Proposition~2.1.1]{etesse2023schmidtkolchin} the space of maximal minors of $\mathbf{S}_{n + 1, h}$ is exactly the space of differentially homogeneous polynomials of degree $h + 1$, and its dimension is equal to $(n + 1)^{h + 1}$.
    Furthermore, by~\cite[Proposition~1.3]{Reinhart1996}, the restriction of the map $P \to P|_{x_{n + 1} = 1}$ on the space of differentially homogeneous polynomials of fixed degree is injective.
    Therefore, the dimension of the space of maximal minors of $\mathbf{S}_{n + 1, h}|_{x_{n + 1} = 1}$ is also equal to $(n + 1)^{h + 1}$.
\end{proof}

\begin{proof}[Proof of Theorem~\ref{thm:poincare}]
    The concatenation of Lemmas~\ref{lem:T_to_S}, \ref{lem:S_to_S}, and~\ref{lem:diff_homog} implies that the dimension of the space of minors of $\mathbf{T}_{n, h}$ is equal to $(n + 1)^{h + 1}$.
    Then Corollary~\ref{cor:main_at0} implies that $\dim (\mathcal{I}_n^{\arc}\cap k[x^{\leq h}])^\perp = (n + 1)^{h + 1}$. Moreover, 
    \cite[Proposition~5.5]{Marinari-Moller-Mora} implies that 
    \[
    \dim k[x^{\leq h}] / (\mathcal{I}_n^{\arc}\cap k[x^{\leq h}]) = (n + 1)^{h + 1}.\qedhere
    \]
\end{proof}


\bibliographystyle{abbrvnat}
\bibliography{bibdata}
\end{document}